\newcommand{\GA}{{\rm GA}}
\newcommand{\EA}{{\rm EA}}
\newcommand{\A}{\mathbb{A}}
\newcommand{\IN}{\mathbb{N}}
\DeclareMathOperator{\Aut}{Aut}
\newtheorem{theorem}{Theorem}
\newtheorem{lemma}[theorem]{Lemma}
\newtheorem{claim}[theorem]{Claim}
\newtheorem{corollary}[theorem]{Corollary}
\theoremstyle{definition}
\newtheorem{definition}{Definition}
\theoremstyle{remark}
\newtheorem{remark}{Remark}
\newtheorem{example}{Example}
\newcommand{\floor}[1]{{\left\lfloor{#1}\right\rfloor}}
\title{A unified approach to embeddings of a line in 3-space}
\author{%
Drew Lewis\thanks{Department of Mathematics and Statistics,  University of South Alabama.  Email address: \texttt{drewlewis@southalabama.edu}}%
}
\begin{document}
\maketitle

\begin{abstract}
While the general question of whether every closed embedding of an affine line in affine \(3\)-space can be rectified remains open, there have been several partial results proved by several different means. We provide a new approach, namely constructing (strongly) residual coordinates, that allows us to give new proofs of all known partial results, and in particular generalize the results of Bhatwadekar-Roy and Kuroda on embeddings of the form \((t^n,t^m,t^l+t)\).
\end{abstract}

\section{Introduction}

Let \(k\) be an algebraically closed field of characteristic zero throughout.  The embedding problem, one of the central problems in affine algebraic geometry, asks whether every closed embedding \(\A^m \hookrightarrow \A^n\) is equivalent to the standard embedding (such embeddings are called {\em rectifiable}).  In general, this is a very difficult problem that has led to several fruitful avenues of research.  If \(n \geq 2m+2\), then every embedding is rectifiable due to a general result of Srinivas \cite{Srinivas}.  Two remaining cases have attracted the most attention: the case where \(n=m+1\), and the case where \(m=1\).  In the former case, it is often referred to as the Abhyankar-Sathaye embedding conjecture, and can be reformulated as asking whether every hyperplane in \(\A^n\) is a coordinate; as this is not the focus of this paper, we simply suggest the papers \cite{Sathaye,RS,WrightCancel,Kaliman} to the interested reader, and note that it remains open for \(n \geq 3\).

In this paper, we are interested in the \(m=1\) case, i.e. embeddings \(\A \hookrightarrow \A^n\). Note that the result of Srinivas \cite{Srinivas} provides an affirmative answer in the case \(n \geq 4\).  In the case \(n=2\), an affirmative answer was provided by Abyhankar and Moh \cite{AM}, and independently, Suzuki \cite{Suzuki}.  Thus, in the \(m=1\) case, the only remaining open case is embeddings \(\A \hookrightarrow \A^3\), so we direct our attention there.

Abhyankar \cite{Abhyankar} conjectured (among other things) that the family of embeddings \((t^n, t^{n+1}, t^{n+2}+t)\) are not rectifiable.  Craighero \cite{Craighero1, Craighero2} showed for \(n=3\) and \(n=4\), these embeddings are, in fact, rectifiable; but it remains an open question for \(n \geq 5\).  Later, Bhatwadekar and Roy \cite{BR} considered the more general family of embeddings \((t^{n}, t^m, t^l+t)\),  and showed that many of these are rectifiable.  More recently, Kuroda \cite{Kuroda} showed that another subset of this family are rectifiable.  These results are stated precisely in Section \ref{sec:unified} below.

All of these papers use different approaches. In \cite{Craighero1}, Craighero explicitly writes down the rectifying automorphism, while in \cite{Craighero2} he explicitly computes a polynomial, and as this polynomial is linear in a variable, appeals to a well-known result of Sathaye \cite{Sathaye} to see it can be extended to a rectifying automorphism.  Bhatwadekar and Roy \cite{BR} make use of the Dedekind conductor, while Kuroda \cite{Kuroda} constructs rectifying automorphisms from exponentials of locally nilpotent derivations similar to the Nagata map.   

In this paper, we take a single approach that we apply to recover all of these results and extend some of them.  To broadly explain our approach, first note that an embedding \(\mathbb{A} \hookrightarrow \mathbb{A}^3\) corresponds to a surjection of polynomial rings \(k[x,y,z] \rightarrow k[t]\).  It is well-known that an embedding is rectifiable if and only if there is a coordinate of \(k[x,y,z]\) that maps to \(t\).  Our approach is to construct (strongly) residual coordinates that map to \(t\), and then use a criterion of \cite{StronglyResidual} to see that they are coordinates (see Corollary \ref{cor:criterion} below).

In the next section, we review some standard definitions and results from the field, and then describe our general approach.  Then in Section \ref{sec:unified}, we precisely state and give new proofs of the results of \cite{Craighero1, Craighero2, BR, Kuroda}; in particular, we also prove generalizations of the results of Bhatwadekar and Roy \cite{BR} and Kuroda \cite{Kuroda} in Theorems \ref{thm:BRgeneral} and \ref{thm:Kurodageneral}, respectively.

\section{Preliminaries}\label{sec:preliminaries}

Let us begin by quickly recalling some fairly standard definitions; more detailed explanations can be found in \cite{Arno} or \cite{Wright}.
\begin{itemize}
\item We use \(\GA_n(k)\) to denote the general automorphism group \(\Aut _k \mathbb{A} ^n\).  This group is anti-isomorphic to the automorphism group of the polynomial ring \(k^{[n]}\).  
\item  \(\EA_n(k)\) denotes the subgroup of \(\GA_n(k)\) generated by elementary automorphisms, i.e. those fixing \(n-1\) variables. 
\item A polynomial \(f \in k^{[n]}\) is called a {\em coordinate} (or {\em variable} by some authors) if there exist \(f_2,\ldots,f_n\) such that \((f,f_2,\ldots,f_n) \in \GA_n(k)\). 
\item An embedding \(\phi:\A \hookrightarrow \A^3\) is called {\em rectifiable} if there exists \(\theta \in \GA_3(k)\) such that \(\theta \phi = (t,0,0)\).
\item For any morphism \(\phi:\A^m \rightarrow \A^n\), we use \(\phi ^*\) to denote the corresponding map \(\phi^*:k^{[n]} \rightarrow k^{[m]}\).
\end{itemize}

 The following is a well-known algebraic characterization of rectifiable embeddings.
\begin{lemma}\label{lem:coordinate}
Let \(\phi: \A \hookrightarrow \A^n\) be an embedding.  If \(f \in k^{[n]}\) is a coordinate with \(\phi^*(f)=t\), then \(\phi\) is rectifiable.
\end{lemma}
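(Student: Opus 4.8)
The plan is to build the rectifying automorphism from the given coordinate in two stages: first move $f$ into the first variable slot, and then use an elementary (triangular) automorphism to clear the remaining slots. The whole argument is a short computation once the composition conventions are set straight, so most of the work is bookkeeping.

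First I would let $\beta \in \GA_n(k)$ be the automorphism furnished by the hypothesis that $f$ is a coordinate, namely $\beta = (f, f_2, \ldots, f_n)$; this lies in $\GA_n(k)$ precisely because $f, f_2, \ldots, f_n$ form a coordinate system. Composing with the embedding and using $\phi^*(f) = t$, I would compute
\[
\beta\phi = \bigl(\phi^*(f), \phi^*(f_2), \ldots, \phi^*(f_n)\bigr) = (t, q_2, \ldots, q_n),
\]
where each $q_i := \phi^*(f_i)$ lies in $k[t]$, since $\phi^*$ has target the coordinate ring of $\A$. The key gain is that the first slot is now exactly $t$, so $\beta\phi$ differs from the standard embedding only in a ``tail'' $(q_2, \ldots, q_n)$ that depends polynomially on the first coordinate alone.

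Next I would introduce the triangular automorphism $\gamma \in \EA_n(k) \subseteq \GA_n(k)$ given by $\gamma = (x_1,\, x_2 - q_2(x_1),\, \ldots,\, x_n - q_n(x_1))$, which is genuinely invertible with explicit inverse obtained by replacing each $-q_i(x_1)$ by $+q_i(x_1)$. Applying $\gamma$ to $\beta\phi$ fixes the first slot and subtracts $q_i(t)$ from the $i$-th slot for $i \geq 2$, yielding $\gamma\beta\phi = (t, 0, \ldots, 0)$. Setting $\theta = \gamma\beta \in \GA_n(k)$ then gives $\theta\phi = (t, 0, \ldots, 0)$, which is exactly the definition of $\phi$ being rectifiable.

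I do not expect a genuine obstacle here: the substance of the argument is just the observation that a coordinate pulling back to $t$ completes to a full coordinate system, after which the remaining coordinates automatically pull back to polynomials in $t$ alone and can be subtracted off triangularly. The only points requiring care are conventional ones, namely tracking the order of composition under the anti-isomorphism between $\GA_n(k)$ and $\Aut_k k^{[n]}$, and verifying that $\gamma$ is a bona fide automorphism rather than merely an endomorphism.
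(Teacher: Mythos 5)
Your proposal is correct and is essentially identical to the paper's own proof: both complete $f$ to an automorphism $(f,f_2,\ldots,f_n)$, observe that composing with $\phi$ yields $(t,g_2(t),\ldots,g_n(t))$, and then clear the tail with the triangular automorphism $(x_1,x_2-g_2(x_1),\ldots,x_n-g_n(x_1))$. No gaps; the bookkeeping points you flag (composition order, invertibility of the triangular map) are handled exactly as in the paper.
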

\begin{proof}
Let \(\theta=(f,f_2,\ldots,f_n) \in \GA_n(k)\).  Since \(\phi^*(f)=t\), we have \(\theta \phi=(t,g_2(t),\ldots,g_n(t))\) for some polynomials \(g_2, \ldots, g_n \in k[t]\).  Let \(\psi = (x_1,x_2-g_2(x_1),\ldots,x_n-g_n(x_1)) \in \GA_n(k)\).  Then \(\psi \theta \phi = (t,0,\ldots,0)\), so \(\phi\) is rectifiable.
\end{proof}

This lemma underlies our approach in this paper: given an embedding, we aim to construct a coordinate that the embedding maps to \(t\).  To construct coordinates, we rely up on the idea of (strongly) residual coordinates.

\subsection{Strongly residual coordinates and associated embeddings}

\begin{definition}A polynomial \(f \in k[x]^{[n]}\) is called a {\em residual coordinate} if it becomes a coordinate modulo \(x-c\) for each \(c \in k\).  A polynomial is called a {\em strongly residual coordinate} if it becomes a coordinate modulo \(x\), and also a coordinate over \(k[x,x^{-1}]\).
\end{definition}

Note that strongly residual coordinates are residual coordinates. A special case of the Dolgachev-Weisfeiler conjecture asserts that all residual coordinates are coordinates.  This remains open for \(n \geq 3\), with the V\'en\'ereau polynomial \(y+x(xz-y(yu+z^2))\) being a famous example of a (strongly) residual coordinate whose status as a coordinate remains unresolved\footnotemark.

\footnotetext{In fact, the V\'en\'ereau polynomial is also a hyperplane, meaning it is a potential counterexample to the Abhyankar-Sathaye conjecture as well. We refer the reader to \cite{KVZ,Vtype,bivariables} for further reading on the V\'en\'ereau polynomial.}

To illustrate our methods, let us consider the well known construction of the Nagata automorphism.  Let \(\alpha = (x,y,z-\frac{y^2}{x}) \in \GA_2(k[x,x^{-1}])\), and set \(\beta = (x,y+x^2z,z) \in \GA_2(k[x])\).  It is straightforward to compute that \[\alpha ^{-1} \beta \alpha = (x,y+x(xz-y^2), z+2y(xz-y^2)+x(xz-y^2)^2).\]
This resulting element of \(\GA_2(k[x])\) is known as the Nagata automorphism; after many years, it was famously shown \cite{SU} to not be generated by elementary and affine elements of \(\GA_3(k)\).  Since it is a conjugation of an elementary automorphism (over \(k[x,x^{-1}]\)), it can also be obtained as an exponential of a locally nilpotent derivation, namely \((xz-y^2)\left(x\frac{\partial}{\partial y}+2y\frac{\partial}{\partial z}\right)\); Kuroda exploits this fact in \cite{Kuroda}.  However, if we instead consider \(\alpha _0 = (x,y,z-\frac{y^2}{x^2})\) and \(\beta _0 = (x,y+x^3z,z)\), then \(\alpha _0 ^{-1} \beta_0 \alpha _0 \notin \GA_2(k[x])\); however, letting \(\gamma = (x,y,z+\frac{2y^3}{x})\), then a straightforward computation verifies that \(\gamma \alpha _0 ^{-1} \beta _0 \alpha _0 \in \GA_2(k[x])\).  Note that since we are no longer simply conjugating, it is not an exponential of a locally nilpotent derivation.

We now generalize this approach; the following is a special case of Theorem 13 of \cite{StronglyResidual}, but we also present a short direct proof here.
\begin{theorem}\label{thm:cleardenominators}
Let \(\alpha, \beta \in \EA_2(k[x,x^{-1}])\) be of the form \(\alpha =(x,y,z+x^{-l}P(y))\) for some \(P \in k[x][y]\) and \(\beta = (x,y+x^a(x^lz)^b,z)\) for some \(a,b \in \IN\). Then there exists \(R \in k[x][y]\) such that setting \(\gamma = (x,y,z+x^{-l}R(y))\), then \(\gamma \beta \alpha \in \GA_2(k[x])\).  Moreover, if \(P(y) \in (y^c)\) for some \(c \in \IN\), then \(R(y)\in (y^c)\).
\end{theorem}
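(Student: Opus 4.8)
The plan is to compute the composite \(\gamma\beta\alpha\) explicitly, reduce the requirement ``\(\gamma\beta\alpha\in\GA_2(k[x])\)'' to a single congruence that \(R\) must satisfy modulo \(x^l\), solve that congruence by inverting a unipotent substitution, and finally verify that the resulting \(\gamma\beta\alpha\) is a genuine automorphism over \(k[x]\). A direct computation (composing tuples by substitution, as in the Nagata example above), using \((z+x^{-l}P(y))^b = x^{-lb}(x^lz+P(y))^b\), gives
\[\beta\alpha = \bigl(x,\; y + x^{a}(x^{l}z+P(y))^{b},\; z+x^{-l}P(y)\bigr).\]
Since \(x^lz+P(y)\in k[x][y,z]\) and \(a\ge 0\), the \(y\)-coordinate \(Y:=y + x^{a}(x^{l}z+P(y))^{b}\) already lies in \(k[x][y,z]\). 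Applying \(\gamma\) then yields \(\gamma\beta\alpha=(x,\,Y,\,z+x^{-l}(P(y)+R(Y)))\), so the only possible denominators sit in the last slot, and the entire task reduces to choosing \(R\in k[x][y]\) with \(P(y)+R(Y)\in x^{l}k[x][y,z]\).

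The \emph{crux} is the observation that modulo \(x^{l}\) the element \(Y\) depends on \(y\) alone: because \(x^lz\equiv 0\), we have \(Y\equiv \bar Y:=y+x^{a}P(y)^{b}\pmod{x^{l}}\), and since \(R\) has coefficients in \(k[x]\) this forces \(R(Y)\equiv R(\bar Y)\pmod{x^{l}}\). Thus it suffices to solve \(R(\bar Y)\equiv -P(y)\pmod{x^{l}}\) in \(k[x]/(x^{l})[y]\). Here I would use \(a\ge 1\): the substitution \(s\colon y\mapsto \bar Y=y+x^{a}P(y)^{b}\) is congruent to the identity modulo the nilpotent ideal \((x)\) of \(k[x]/(x^{l})\), hence is an automorphism of \(k[x]/(x^{l})[y]\) by the standard successive-approximation argument. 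I then set \(R\equiv -P\circ s^{-1}\pmod{x^{l}}\) and lift it arbitrarily to \(k[x][y]\); by construction \(R(\bar Y)\equiv -P(y)\), so the last coordinate becomes denominator-free. Recognizing this mod-\(x^{l}\) reduction as an invertible \(z\)-free substitution is the main obstacle, and it is what drives the whole proof.

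It remains to upgrade ``denominator-free'' to membership in \(\GA_2(k[x])\), for which I would check that the inverse \(\alpha^{-1}\beta^{-1}\gamma^{-1}\) is denominator-free as well. The same computation gives its last coordinate as \(z-x^{-l}(R(y)+P(Y'))\) with \(Y'=y-x^{a}(x^{l}z-R(y))^{b}\). Modulo \(x^{l}\) one has \(Y'\equiv y-x^{a}(-1)^{b}R(y)^{b}\); substituting \(R\equiv -P\circ s^{-1}\) the two factors of \((-1)^{b}\) cancel, and using the defining relation \(s^{-1}(y)=y-x^{a}P(s^{-1}(y))^{b}\) this collapses to \(Y'\equiv s^{-1}(y)\). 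Hence \(R(y)+P(Y')\equiv R(y)+P(s^{-1}(y))\equiv 0\pmod{x^{l}}\), so the inverse is denominator-free too. As \(\gamma\beta\alpha\) and its \(k[x,x^{-1}]\)-inverse then both restrict to mutually inverse \(k[x]\)-algebra endomorphisms of \(k[x][y,z]\), we conclude \(\gamma\beta\alpha\in\GA_2(k[x])\). The point needing care is precisely this sign bookkeeping, which explains why a single choice of \(R\) clears denominators in both directions.

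Finally, for the ``moreover'' statement I would track the ideal \((y^{c})\) through the construction of \(s^{-1}\). If \(P\in(y^{c})\) then \(P^{b}\in(y^{bc})\subseteq(y^{c})\) (using \(b\ge 1\)), so the fixed-point iteration \(h_{0}=0\), \(h_{n+1}=-x^{a}P(y+h_{n})^{b}\) producing \(s^{-1}(y)=y+h\) keeps each \(h_{n}\in(y^{c})\) by induction; hence \(s^{-1}(y)\in y+(y^{c})\) and \(R\equiv -P\circ s^{-1}\in(y^{c})\). Choosing the evident coefficientwise lift, \(R\) may be taken in \((y^{c})\cap k[x][y]\), as required.
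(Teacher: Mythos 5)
Your proof is correct, and it reaches the conclusion by a genuinely different route than the paper. The paper works iteratively at the level of automorphisms: starting from \(\beta\alpha=(x,\,y+x^a(x^lz+P(y))^b,\,z+x^{-l}P(y))\), it repeatedly composes with elementary maps \(\theta_i=(x,y,z-x^{-l+ai}G_i(y))\), using Taylor's formula to show that each such composition multiplies the offending pole term by \(x^{a}\); after finitely many steps (once \(a(r+1)\ge l\)) the composite is polynomial, and membership in \(\GA_2(k[x])\) is then deduced by citing a standard criterion (Proposition 1.1.7 of van den Essen, applicable since the Jacobian determinant is \(1\)). You instead reduce everything to the single congruence \(R(\bar Y)\equiv -P(y)\pmod{x^l}\) and solve it in closed form, \(R\equiv -P\circ s^{-1}\), by inverting the unipotent substitution \(s\colon y\mapsto y+x^aP(y)^b\) over the Artinian ring \(k[x]/(x^l)\); and you replace the citation by a direct computation that \(\alpha^{-1}\beta^{-1}\gamma^{-1}\) is also denominator-free, so that \(\gamma\beta\alpha\) and its inverse restrict to mutually inverse endomorphisms of \(k[x][y,z]\). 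The underlying mechanism is the same in both cases --- successive approximation along powers of \(x\), with your fixed-point iteration for \(s^{-1}\) playing the role of the paper's sequence \(\theta_0,\ldots,\theta_r\) --- but your organization buys a conceptual, closed-form description of \(R\) modulo \(x^l\) and a self-contained endgame, while the paper's buys an explicit finite formula for \(R\) as a sum and a shorter write-up by outsourcing the final step. One point worth making explicit: both arguments require \(a\ge 1\) and \(b\ge 1\) (in this paper \(\IN\) denotes the positive integers) --- you use this for the unipotence of \(s\) and for \(P^b\in(y^{c})\), the paper for the termination condition of its iteration --- so your flagged use of \(a\ge 1\) is consistent with the statement rather than an added hypothesis.
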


\begin{proof}
We begin by proving the following:
\begin{claim}
Let \(F \in k[x]^{[2]}\) and \(G \in k[x]^{[1]}\), and suppose \[\phi = (x,y+x^a(x^lz+P(y))^b,z+x^azF(x^lz,y)+x^{-l}G(y)) \in \GA_2(k[x,x^{-1}]).\]  Then setting \(\theta = (x,y,z-x^{-l}G(y)) \in \EA_2(k[x,x^{-1}])\),  there exist \(\tilde{F} \in k[x]^{[2]}\) and \(\tilde{G} \in k[x]^{[1]}\) such that 
\[\theta \phi = (x,y+x^a(x^lz+P(y))^b,z+x^az\tilde{F}(x^lz,y)+x^{-l+a}\tilde{G}(y)) \in \GA_2(k[x,x^{-1}]).\]
Moreover,  \(\tilde{G}(y) \in (y^c)k[x,y]\) as well.
\end{claim}
\begin{proof}
This follows from direct computation. Note that \(\theta\) fixes \(x\) and \(y\), so we need only compute 
\[(\theta \phi)^*(z) = z+x^azF(x^lz,y)+x^{-l}\left(G(y) - G(y+x^a(x^lz+P(y))^b)\right).\]
The key observation is to apply Taylor's formula to obtain 
\[G(y)-G(y+x^a(x^lz+P(y))^b) = -\sum _{i=1} \frac{1}{i!}G^{(i)}(y)(x^a(x^lz+P(y))^b)^i.\] 
Applying a binomial expansion to \((x^lz+P(y))^{bi}\), we see that there exists \(\tilde{F}_0 \in k[x]^{[2]}\) such that
\[G(y)-G(y+x^a(x^lz+P(y))^b) = -\sum _{i=1} \frac{1}{i!}G^{(i)}(y)x^{ai}(P(y))^{bi} + x^{l+a}z\tilde{F}_0(x^lz,y). \]
Then we observe that setting 
\begin{align*}
\tilde{G}(y)=-\sum _{i=1} \frac{1}{i!}G^{(i)}(y)x^{a(i-1)}(P(y))^{bi} &&\text{and}&&  \tilde{F}(x^lz,y)=F(x^lz,y)+\tilde{F}_0(x^lz,y),\end{align*}
we obtain the desired result.

\end{proof}

Now, beginning with \(\beta \alpha\) and repeatedly applying the claim, we may inductively produce a sequence \(\theta _0, \ldots, \theta _r\) with \(\theta _i = (x,y,z-x^{-l+ai}G_i(y))\) (with each \(G_i(y) \in (y^c)\)) such that 
\[\theta _r \cdots \theta _0 \beta \alpha = (x,y+x^a(x^lz+P(y))^b,z+x^az\tilde{F_r}(x^lz,y)+x^{-l+a(r+1)}\tilde{G_r}(y))\]
for some \(\tilde{F_r}(x^lz,y), \tilde{G_r}(x^lz,y) \in k[x][x^lz,y]\).  Let \(r\) be minimal so that \(a(r+1) \geq l\), and set \(\gamma = \theta _r \cdots \theta _0\).  Then letting \(R(y) = -\sum _{i=0} ^r x^{ai}G_i(y) \in (y^c)\), we see \(\gamma = (x,y,z+x^{-l}R(y))\) with \(\gamma \beta \alpha \in \GA_2(k[x,x^{-1}])\) and \(\gamma \beta \alpha \in (k[x,y,z])^3\).  It is well known (e.g. Proposition 1.1.7 in \cite{Arno}) that this implies that \(\gamma \beta \alpha \in \GA_2(k[x])\) as required.
\end{proof}

\begin{corollary}\label{cor:criterion}
Let \(\phi: \A \hookrightarrow \A^3\) be an embedding.  Suppose \(\alpha, \beta \in \EA_2(k[x,x^{-1}])\) are of the form \(\alpha =(x,y,z+x^{-l}P(y))\) and \(\beta = (x,y+xQ(x^lz),z)\) for some \(P,Q \in k[x]^{[1]}\), and that \(\phi^* \alpha^* \beta^*(y)=t\).  Then \(\phi\) is rectifiable.
\end{corollary}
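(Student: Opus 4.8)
The plan is to exhibit a single polynomial \(f \in k[x,y,z]\) with \(\phi^*(f) = t\) that is a coordinate, and then invoke Lemma \ref{lem:coordinate}. The natural candidate is \(f := \alpha^*\beta^*(y)\). First I would carry out the (purely formal) computation of this element. Since \(\alpha\) fixes \(x\) and \(y\) and sends \(z \mapsto z + x^{-l}P(y)\), we have \(\alpha^*(x^l z) = x^l z + P(y)\), and therefore
\[
f = \alpha^*\beta^*(y) = \alpha^*\!\left(y + xQ(x^l z)\right) = y + xQ\!\left(x^l z + P(y)\right).
\]
The crucial feature here is that the factor \(x^{-l}\) appearing in \(\alpha\) is absorbed by the \(x^{l}\) inside \(\beta\), so that although \(\alpha,\beta \in \EA_2(k[x,x^{-1}])\), the element \(f\) already lies in \(k[x,y,z]\); in particular the composite \(\phi^*\alpha^*\beta^*\) is defined on \(y\) and equals \(\phi^*(f)\), which is \(t\) by hypothesis. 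By Lemma \ref{lem:coordinate} it then suffices to prove that \(f\) is a coordinate of \(k[x,y,z]\).

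To establish this, I would realize \(f\) as a component of a genuine polynomial automorphism. Applying Theorem \ref{thm:cleardenominators} produces some \(\gamma = (x,y,z+x^{-l}R(y))\) with \(\gamma\beta\alpha \in \GA_2(k[x]) \subseteq \GA_3(k)\). Since \(\gamma\) fixes both \(x\) and \(y\), the anti-homomorphism property of \((\cdot)^*\) gives
\[
(\gamma\beta\alpha)^*(y) = \alpha^*\beta^*\gamma^*(y) = \alpha^*\beta^*(y) = f,
\]
so that \(f\) is precisely the \(y\)-component of the automorphism \(\gamma\beta\alpha\). A component of an element of \(\GA_3(k)\) is, after reordering the tuple, a coordinate; hence \(f\) is a coordinate of \(k[x,y,z]\), completing the argument.

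The one place requiring genuine care --- and the main obstacle --- is the discrepancy between the hypotheses: Theorem \ref{thm:cleardenominators} is stated for a monomial shear \(\beta = (x, y + x^a(x^lz)^b, z)\), whereas here \(\beta = (x, y + xQ(x^lz), z)\) for a general \(Q \in k[x]^{[1]}\). I would dispose of this by checking that the denominator-clearing proof of that theorem applies verbatim. Indeed, the full composite is \(\beta\alpha = (x,\, f,\, z + x^{-l}P(y))\), whose only denominators sit in the \(z\)-component, so the task is exactly to find \(R\) with \(P(y) + R\!\left(y + xQ(x^lz + P(y))\right) \equiv 0 \pmod{x^l}\). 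The inductive Taylor-expansion step driving the proof of Theorem \ref{thm:cleardenominators} depends only on the \(y\)-shift being a polynomial in \(x^lz + P(y)\) with all coefficients divisible by \(x\), which remains true for \(xQ(x^lz+P(y))\); alternatively, one may expand \(Q\) into monomials to write \(\beta\) as a commuting product of monomial shears. The composition conventions must then be tracked carefully to confirm that the \(y\)-component of \(\gamma\beta\alpha\) is indeed \(\alpha^*\beta^*(y)\) and not some other expression.
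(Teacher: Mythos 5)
Your proof is correct and is essentially the paper's own argument: apply Theorem \ref{thm:cleardenominators} to produce \(\gamma=(x,y,z+x^{-l}R(y))\), note that \(f:=(\gamma\beta\alpha)^*(y)=\alpha^*\beta^*\gamma^*(y)=\alpha^*\beta^*(y)\) since \(\gamma^*(y)=y\), so \(f\) is a coordinate with \(\phi^*(f)=t\), and finish with Lemma \ref{lem:coordinate}. Your flagging of the mismatch between the theorem's monomial shear \(\beta=(x,y+x^a(x^lz)^b,z)\) and the corollary's general \(Q\) is a genuine point the paper passes over silently, and your primary resolution is the right one --- the Taylor-expansion induction uses only that the \(y\)-shift is a polynomial in \(x^lz+P(y)\) whose every term is divisible by \(x\) --- whereas the alternative of factoring \(Q\) into commuting monomial shears is less clean, since the intermediate \(z\)-shears \(\theta_i\) produced by the theorem do not commute with the remaining \(y\)-shears.
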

\begin{proof}
Apply the previous theorem to produce \(\gamma = (x,y,z+x^{-l}R(y))\) such that \(\gamma \beta \alpha \in \GA_2(k[x])\). Let \(f=(\gamma \beta \alpha)^*(y)\), so \(f\) is a coordinate.  Since \(\gamma^*(y)=y\), we have \(t=\phi^*\alpha^*\beta^*(y) = \phi^*\alpha^*\beta^*\gamma^*(y)=\phi^*(\gamma \beta \alpha)^*(y)=\phi^*(f)\).  Lemma \ref{lem:coordinate} then completes the proof.
\end{proof}

\begin{remark}
An elementary example of how this corollary can be used can be found in Theorem \ref{thm:Craighero1}.
\end{remark}
\subsection{A useful combinatorial lemma}
Here we prove a combinatorial lemma that we use in the proof of Lemma \ref{lem:cd} below; the reader may prefer to skip the proof for now, and proceed to Section \ref{sec:unified}. This is used only in the proof of Lemma \ref{lem:cd} below.
For an integer \(m\), let us write \(\delta _m = \begin{cases} 1 & m \text{ is even} \\ 0 & m \text{ is odd}\end{cases}\).
\begin{lemma}\label{lem:combinatorial}
Let \(m \in \IN\).  Then there exist \(\alpha _0, \ldots, \alpha _\floor{\frac{m}{2}}, \beta \in k\) such that, in \(k[s]\),
\[ \sum _{i=0} ^\floor{ \frac{m}{2}}  \alpha _i s^i(1+s)^{m-2i} = 1+ \beta \delta _m s^{\frac{m}{2}}+s^m .\]
\end{lemma}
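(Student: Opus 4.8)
The plan is to read the identity inside the space $V \subseteq k[s]$ of \emph{palindromic} polynomials of degree at most $m$ (those $\sum_{j=0}^m c_j s^j$ with $c_j = c_{m-j}$ for all $j$), and then produce the $\alpha_i$ by a pure linear-algebra/basis argument, with $\beta$ serving merely as a free coefficient that we never actually need to switch on.

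First I would set $p_i(s) = s^i(1+s)^{m-2i}$ for $0 \le i \le \floor{\frac{m}{2}}$, noting that $m-2i \ge 0$ throughout so each $p_i$ is a genuine polynomial. The key structural observation is the reversal identity $s^m p_i(1/s) = p_i(s)$, which is immediate: $s^m p_i(1/s) = s^m\cdot s^{-i}(1+s^{-1})^{m-2i} = s^{m-i}\cdot s^{-(m-2i)}(1+s)^{m-2i} = s^i(1+s)^{m-2i} = p_i(s)$. Hence every $p_i$ is palindromic of degree $\le m$, i.e. lies in $V$. The right-hand side $1 + \beta\delta_m s^{m/2} + s^m$ is visibly palindromic for any value of $\beta$ (reversal swaps $1 \leftrightarrow s^m$ and fixes the middle term $s^{m/2}$ when $m$ is even), so both sides of the desired identity live in $V$.

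Next I would check that the $p_i$ form a basis of $V$. Since $\deg p_i = i + (m-2i) = m-i$ with leading coefficient $1$, the polynomials $p_0, \ldots, p_{\floor{m/2}}$ have pairwise distinct degrees $m, m-1, \ldots, \lceil m/2\rceil$ and are therefore linearly independent. There are exactly $\floor{\frac{m}{2}}+1$ of them, which matches $\dim V$, as a palindromic polynomial of degree $\le m$ is freely determined by the coefficients $c_0, \ldots, c_{\floor{m/2}}$ (the remaining ones being forced by $c_j = c_{m-j}$). A set of $\dim V$ linearly independent elements of $V$ is a basis, so $\{p_i\}$ spans $V$.

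Finally, since $1 + s^m$ already belongs to $V$, it is a (unique) linear combination of the $p_i$, and reading off the coefficients yields the required $\alpha_0, \ldots, \alpha_{\floor{m/2}}$; one may simply take $\beta = 0$ in every case. When $m$ is odd we have $\delta_m = 0$, so $\beta$ is irrelevant; when $m$ is even the slot for $\beta s^{m/2}$ merely gives extra freedom in the middle coefficient that we are free to leave at $0$ (or, if convenient in the application, to absorb any prescribed middle value into $\beta$). The only points needing care are the exact matching of the count $\floor{\frac{m}{2}}+1$ with $\dim V$ across the even and odd parities and the verification of the reversal identity; neither is a real obstacle, so once the palindromic viewpoint is adopted the argument is short, and I expect the main temptation to avoid is a clumsier induction on $m$ that would obscure this symmetry.
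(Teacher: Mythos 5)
Your proof is correct, and it takes a genuinely different route from the paper. The paper argues by induction on \(m\), treating the odd case \(m=2k+1\) and the even case \(m=2k\) separately: it expands \((1+s)^m\) using the symmetry of binomial coefficients, pairs terms as \(\binom{m}{j}s^j\left(1+s^{m-2j}\right)\), applies the inductive hypothesis to each factor \(1+s^{m-2j}\), and collects coefficients; the middle term \(\beta\delta_m s^{m/2}\) in the even case is an artifact of this recursion (it arises as \(\binom{2k}{k}+\sum_j \binom{2k}{j}\beta_j\), e.g. \(\beta=2\) when \(m=2\)). Your palindromic-basis argument replaces that bookkeeping with a single dimension count: the \(p_i=s^i(1+s)^{m-2i}\) are self-reciprocal relative to \(m\) and have distinct degrees \(m-i\) with leading coefficient \(1\), hence form a basis of the \(\left(\floor{\frac{m}{2}}+1\right)\)-dimensional space \(V\), and \(1+s^m\in V\). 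Besides being shorter and non-inductive, this proves something strictly stronger than the lemma as stated: one may always take \(\beta=0\), and the \(\alpha_i\) are unique; in particular the correction term \(-\beta\delta_r x^{b\frac{r}{2}}\) in the polynomial \(p\) constructed in the proof of Lemma \ref{lem:cd} could simply be omitted under your version. The one point that must be (and is) handled carefully is that ``palindromic'' is read relative to the fixed exponent \(m\), i.e. \(c_j=c_{m-j}\) for coefficients indexed \(0,\ldots,m\), so that \(V\) is a vector space of dimension exactly \(\floor{\frac{m}{2}}+1\) in both parities even though some \(p_i\) have degree less than \(m\).
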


\begin{proof}
First, suppose \(m=2k+1\) is odd. We induct on \(k\), with the \(k=0\) case being trivial. For \(k>0\), we first expand \((1+s)^{2k+1}\) noting the symmetry of coefficients:

\begin{equation*}
(1+s)^{2k+1} = 1+s^{2k+1} + \sum _{j=1} ^{k} {2k+1 \choose j} (s^j + s^{2k+1-j}) 
= 1+ s^{2k+1} + \sum _{j=1} ^{k} {2k+1 \choose j}s^j (1 + s^{2k+1-2j}). 
\end{equation*}

For \(1 \leq j \leq k\), by the induction hypothesis we choose \(\alpha _{j,0},\ldots,\alpha _{k-j}\) such that 
\[1+s^{2k+1-2j} =\sum _{i=0} ^{k-j} \alpha _{j,i} s^i (1+s)^{2k+1-2j-2i}.\]
Then
\begin{align*}
(1+s)^{2k+1} &=
 1+ s^{2k+1} + \sum _{j=1} ^{k} {2k+1 \choose j}s^j \sum _{i=0} ^{k-j} \alpha _{j,i} s^i (1+s)^{2k+1-2j-2i}.
\end{align*}
Letting \(\alpha _{0,i} =  -{2k+1 \choose i}\) for \(0 \leq i \leq k\), we then have
\begin{align*} 
1+s^{2k+1}&= -\sum _{j=0} ^{k} \sum _{i=0} ^{k-j}{2k+1 \choose j} \alpha _{j,i} s^{i+j} (1+s)^{2k+1-2(i+j)}  \\
&=  \sum _{r=0} ^k  s^{r} (1+s)^{2k+1-2r} \left(-\sum _{j=0} ^{r} {2k+1 \choose j} \alpha _{j,r-j}\right) .
\end{align*}

Now, suppose \(m=2k\) is even. We again induct on \(k\), with \(k=0\) being trivial.  For \(k>0\) we proceed similarly by expanding \((1+s)^{2k+1}\), noting however that there is an odd number of terms now.  

\begin{equation*}
(1+s)^{2k} = 1+{2k \choose k } s^{k}+s^{2k} + \sum _{j=1} ^{k-1} {2k \choose j} (s^j + s^{2k-j}) 
= 1+ {2k \choose k} s^{k}+s^{2k} + \sum _{j=1} ^{k-1} {2k \choose j}s^j (1 + s^{2k-2j}) 
\end{equation*}

For \(1 \leq j \leq k-1\), by the induction hypothesis we choose \(\alpha _{j,0},\ldots,\alpha _{k-j}, \beta _j\) such that 
\[1+s^{2k-2j} = \beta _j s^{k-j}+\sum _{i=0} ^{k-j} \alpha _{j,i} s^i (1+s)^{2k-2j-2i}.\]
Then
\begin{align*}
(1+s)^{2k} &=
 1+ {2k \choose k} s^{k}+s^{2k} + \sum _{j=1} ^{k-1} {2k \choose j}s^j \left( \beta _j   s^{k-j}+\sum _{i=0} ^{k-j} \alpha _{j,i} s^i (1+s)^{2k-2j-2i} \right) \\
&=1+ s^{k} \left({2k \choose k}+\sum _{j=1} ^{k-1} {2k \choose j} \beta _j\right)+s^{2k} +\sum _{j=1} ^{k-1} \sum _{i=0} ^{k-j}{2k \choose j} \alpha _{j,i} s^{i+j} (1+s)^{2k-2(i+j)} 
\end{align*}

Now, set  \(\alpha _{0,i} = -{2k \choose i}\) for \(0 \leq i \leq k\), and set \(\beta = {2k \choose k}+\sum _{j=1} ^{k-1} {2k \choose j} \beta _j\).  Then

\begin{align*}
1+\beta s^k + s^{2k} &= 
-\sum _{j=0} ^{k} \sum _{i=0} ^{k-j}{2k \choose j} \alpha _{j,i} s^{i+j} (1+s)^{2k-2(i+j)}\\ 
 &=\sum _{r=0} ^k  s^{r} (1+s)^{2k-2r} \left(-\sum _{j=0} ^{r} {2k \choose j} \alpha _{j,r-j}\right) .
\end{align*}

\end{proof}

\section{A unified approach to known results on embedding}\label{sec:unified}

In this section we summarize all results known to us on embeddings \(\A \hookrightarrow \A^3\), and show how they can all be proved via Corollary \ref{cor:criterion}.  For convenience, we will adopt the notation \(X=\phi^*(x)\), \(Y=\phi^*(y)\), and \(Z=\phi^*(z)\).  So for example, when considering the Abhyankar embeddings \((t^n, t^{n+1}, t^{n+2}+t)\), we will write \(X=t^n\), \(Y=t^{n+1}\), and \(Z=t^{n+2}+t\).  As an elementary example, consider the \(n=2\) Abhyankar embedding \((t^2, t^3, t^4+t)\).  It is straightforward to see that \(Z-X^2=t\), and the polynomial \(z-x^2\) is a coordinate (of the elementary automorphism \((x,y,z-x^2)\)), so by Lemma \ref{lem:coordinate} the embedding \((t^2,t^3,t^4+t)\) is rectifiable.

\subsection{Craighero's results}
The \(n=3\) and \(n=4\) cases of Abhyankar's conjecture were resolved by Craighero \cite{Craighero1,Craighero2}; we are able to give short proofs here. 

\begin{theorem}[Craighero]\label{thm:Craighero1}
The embedding \(\phi = (t^3,t^4,t^5+t)\) is rectifiable.
\end{theorem}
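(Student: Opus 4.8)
The plan is to apply Corollary~\ref{cor:criterion}, so the entire task reduces to exhibiting suitable \(\alpha,\beta \in \EA_2(k[x,x^{-1}])\) with \(\phi^*\alpha^*\beta^*(y)=t\). Writing out the coordinate produced, \(\alpha^*\beta^*(y)=y+xQ(x^{l}z+P(y))\), so under \(\phi^*\) we obtain \(\phi^*(y)+\phi^*(x)\,Q\!\left(\phi^*(x^{l}z+P(y))\right)\). The first thing I would observe is a constraint on the role of \(y\): since the target \(t\) has a degree-one term while every term of \(\phi^*(x)\,Q(\cdots)\) is a multiple of \(\phi^*(x)\), the cleanest way to produce that degree-one term is to have the tracked coordinate \(\phi^*(y)\) carry it. Among \(t^3,t^4,t^5+t\) only \(t^5+t\) qualifies (placing \(t^5+t\) in the \(x\)-slot instead forces \(Q\) to be a non-polynomial rational function, so this really is the only option). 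Since swapping two coordinates of \(\A^3\) is a linear automorphism and hence preserves rectifiability, I would first replace \(\phi\) by the equivalent embedding with \(\phi^*(x)=t^3\), \(\phi^*(y)=t^5+t\), \(\phi^*(z)=t^4\).

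With this relabeling, the requirement \(\phi^*(y+xQ(x^{l}z+P(y)))=t\) becomes \((t^5+t)+t^3\,Q(\cdots)=t\), so I need the ``conic'' \(C:=x^{l}z+P(y)\) to satisfy \(\phi^*(Q(C))=-t^2\). The key computational step is to find \(l,P,Q\) making this clean. Here I would exploit the identity \((t^5+t)^2=t^{10}+2t^6+t^2\) together with \(\phi^*(x^2z)=t^{10}\) and \(\phi^*(2x^2)=2t^6\), which yields the relation \(\phi^*(y^2-x^2z-2x^2)=t^2\). This suggests taking \(l=2\), \(P(y)=2x^2-y^2\), and \(Q\) the identity, so that \(C=x^2z+2x^2-y^2\) has \(\phi^*(C)=-t^2\), and therefore \(\phi^*\alpha^*\beta^*(y)=(t^5+t)+t^3(-t^2)=t\) exactly, with
\[
\alpha=(x,y,z+x^{-2}(2x^2-y^2)),\qquad \beta=(x,y+x^3z,z).
\]
A short direct check of \(\phi^*(y+x^3z+2x^3-xy^2)=t\) confirms this, and Corollary~\ref{cor:criterion} then immediately gives rectifiability.

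I would expect the main obstacle to be precisely the construction of \(\alpha\) and \(\beta\), i.e. the choice of \(l\), \(P\), and \(Q\). Two points make it non-obvious. First, the naive analogue of the Nagata map (the conic \(xz-y^2\), with \(l=1\)) does not work: \(\phi^*(xz-y^2)\) is not a monomial in \(t\), so no univariate \(Q\) sends it to \(-t^2\) cleanly; one is forced to the larger exponent \(l=2\) and the correction term \(2x^2\) in \(P\), which only surfaces after expanding \((t^5+t)^2\). Second, one must notice the preliminary coordinate swap; without it the tracked coordinate maps to \(t^4\) and \(\phi^*\alpha^*\beta^*(y)\) is divisible by \(t^3\), so it can never equal \(t\). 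Once these two observations are in hand, the remainder is a routine verification feeding into Corollary~\ref{cor:criterion}.
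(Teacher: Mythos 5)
Your proof is correct and is essentially identical to the paper's: the paper uses the same conic \(x^2z+2x^2-y^2\) (appearing there as \(Y-\frac{Z^2}{X^2}+2\) with the roles of \(y\) and \(z\) interchanged) and the same composite coordinate, applying Corollary~\ref{cor:criterion} with \(y\) and \(z\) swapped implicitly rather than via your explicit preliminary relabeling. The only difference is presentational — you make the coordinate swap explicit up front, which if anything is a slightly more careful reading of the corollary's hypotheses.
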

\begin{proof}
We begin by computing 
\[Y-\frac{Z^2}{X^2}+2 = t^4-\frac{t^{10}+2t^6+t^2}{t^6} +2 =-t^{-4}.\]
Let \(\alpha = \left(x,y-\frac{z^2}{x^2}+2,z\right)\) and \(\beta = (x,y,z+x(x^2y))\). Then we compute 
\begin{align*}
\phi^*\alpha^*\beta^*(z) &= \phi^*\alpha^*(z+x(x^2y)) \\
&= \phi^*\left(z+x^3\left(y-\frac{z^2}{x^2}+2\right)\right) \\
&= Z+X^3\left(Y-\frac{Z^2}{X^2}+2\right) \\
&= (t^5+t)+t^{9}(-t^{-4}) \\
&= t.
\end{align*}
Thus, by Corollary \ref{cor:criterion}, the embedding is rectifiable.  
\end{proof}

\begin{theorem}[Craighero]\label{thm:Craighero2}
The embedding \(\phi = (t^4,t^5,t^6+t)\) is rectifiable.
\end{theorem}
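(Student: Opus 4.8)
The plan is to follow the template of Theorem~\ref{thm:Craighero1}: exhibit an explicit polynomial that $\phi$ carries to $t$ and then recognize it as a coordinate via the denominator-clearing machinery underlying Corollary~\ref{cor:criterion}. Writing $X=t^4$, $Y=t^5$, $Z=t^6+t$ as usual, the organizing observation is that $Y/X=t$; thus over $k[x,x^{-1}]$ the element $x^{-1}y$ already maps to $t$ and is patently a coordinate there, and the entire difficulty is to trade this Laurent coordinate for a genuine polynomial one. The computation is driven by the two relations $XZ-Y^2=Y$ and $Z^2-X^3=(2Y+1)t^2$, the first expressing that $Z=x^{-1}(y^2+y)$ along the curve and the second being the direct analogue of the identity that makes the case $(t^3,t^4,t^5+t)$ work.

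Next I would attempt to mimic the $n=3$ computation literally. There one cancels the leading term of $Z^2$ against a monomial and is left with a constant plus a single clearable term; the reason it succeeds is that the cross term $2t^{6}$ of $Z^2$ lies in the numerical semigroup $\langle 3,4\rangle$ and so can be removed by $X$-monomials. For $(t^4,t^5,t^6+t)$ the corresponding cancellation $Z^2-X^3=2t^7+t^2$ leaves the cross term $2t^7$, and here is the crux: $7\notin\langle 4,5,6\rangle$, so $t^7$ cannot be eliminated by any monomial in $X,Y,Z$, and the clean one-line argument breaks down. The remedy is to use $Y=t^5$ a second time to bridge the gap: multiplying gives $Y(Z^2-X^3)=2X^3+t^7$, which isolates $t^7$, and then $(Z^2-X^3)-2\bigl(Y(Z^2-X^3)-2X^3\bigr)=t^2$, i.e.\ $t^2=(1-2Y)(Z^2-X^3)+4X^3$. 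Finally $t=Z-Xt^2$, so the candidate coordinate is $f=z-x\bigl[(1-2y)(z^2-x^3)+4x^3\bigr]$, for which one checks directly that $\phi^*(f)=t$.

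The remaining and genuinely harder task is to prove that $f$ is a coordinate. First note $f\equiv z\pmod{x}$, so $f$ is trivially a coordinate modulo $x$; by the criterion of \cite{StronglyResidual} (the general form of which Corollary~\ref{cor:criterion} is a special case) it therefore suffices to show that $f$ is a coordinate over $k[x,x^{-1}]$. I expect this denominator-clearing step to be the main obstacle: unlike the $n=3$ case, $f$ is \emph{not} of the single-application form $(\gamma\beta\alpha)^*(y)$ appearing in Corollary~\ref{cor:criterion} --- the bootstrap that crosses the semigroup gap is visibly a two-stage process, and $f$ is only linear (not of the required monomial shape) in $y$. The plan is therefore to realize $f$ over $k[x,x^{-1}]$ through Theorem~\ref{thm:cleardenominators} applied in two stages, mirroring the two uses of $Y$ above, so as to exhibit an element of $\GA_2(k[x,x^{-1}])$ taking a variable to $f$; clearing the resulting denominators then promotes it to an automorphism over $k[x]$ and shows $f$ is a coordinate, after which Lemma~\ref{lem:coordinate} finishes the argument.

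Finally, it is worth emphasizing that the semigroup gap isolated above is exactly the phenomenon that separates the solved cases from the open ones: the same obstruction, now appearing at several exponents at once, is what prevents this method (and, so far, every other) from settling $(t^n,t^{n+1},t^{n+2}+t)$ for $n\ge 5$.
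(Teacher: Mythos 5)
Your opening computation is correct and is a genuinely different identity from the paper's: with \(X=t^4\), \(Y=t^5\), \(Z=t^6+t\) one indeed has \(Y(Z^2-X^3)-2X^3=t^7\), hence \(t^2=(1-2Y)(Z^2-X^3)+4X^3\), and \(\phi^*(f)=t\) for the polynomial \(f=z-x\bigl[(1-2y)(z^2-x^3)+4x^3\bigr]\). However, the proof has a genuine gap at exactly the point you call ``the genuinely harder task.'' Your reduction --- \(f\equiv z\pmod{x}\), so by the criterion of \cite{StronglyResidual} it suffices to show \(f\) is a coordinate over \(k[x,x^{-1}]\) --- appeals to a criterion that does not exist. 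A polynomial that is a coordinate modulo \(x\) and a coordinate over \(k[x,x^{-1}]\) is precisely a strongly residual coordinate, and the paper states explicitly that whether such polynomials are coordinates is \emph{open} for \(n\ge 3\); the V\'en\'ereau polynomial is the standing potential counterexample. What Theorem~\ref{thm:cleardenominators} and Corollary~\ref{cor:criterion} actually provide is structural: they certify as coordinates only those polynomials that arise as the image of a variable under a composition \(\gamma\beta\alpha\) in which \(\alpha\) and \(\beta\) have the specific elementary shapes in the hypotheses. The coordinate is an \emph{output} of the denominator-clearing process; you cannot hand the theorem an arbitrary polynomial afterward.

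This matters because your \(f\) provably cannot be fed into Corollary~\ref{cor:criterion} in a single application: every coordinate that corollary produces (with the roles of \(y\) and \(z\) interchanged) has the form \(z+xQ\bigl(x^l y+P(z)\bigr)\), and if such a polynomial is linear in \(y\) then \(Q\) must be linear, forcing the coefficient of \(y\) to lie in \(k[x]\); your \(f\) has \(y\)-coefficient \(2x(z^2-x^3)\), which depends on \(z\). You acknowledge this and propose applying Theorem~\ref{thm:cleardenominators} ``in two stages,'' but no two-stage statement exists in the paper and no construction is given, so the crux of the argument is missing rather than deferred. The paper's proof avoids this trap by reversing the order of operations: instead of first fixing a polynomial, it chooses the Laurent corrections \(\alpha=\left(x,\;y-\frac{1}{2}\frac{z^3}{x^2}+\frac{1}{2}xz+\frac{3}{2}+\frac{1}{4}\frac{z^2-x^3}{x^3},\;z\right)\) and \(\beta=(x,y,z-4x^4y)\), which satisfy the hypotheses of Corollary~\ref{cor:criterion} on the nose, verifies \(\phi^*\alpha^*\beta^*(z)=t\) by the same kind of cancellation you discovered, and lets the corollary manufacture the coordinate. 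To salvage your polynomial-first route you would need either to re-derive a polynomial mapping to \(t\) in the exact shape the corollary requires, or to invoke and verify an independent criterion --- e.g.\ Sathaye's theorem on polynomials linear in a variable, which is how Craighero's original argument in \cite{Craighero2} proceeds and to which your \(f\), being linear in \(y\), might be amenable --- but neither verification is in your proposal.
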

\begin{proof}
We first observe that \(Z^2-X^3=2t^7+t^2\). Next, letting \(a,b \in k\) be arbitrary for the moment, we compute
\begin{align*}
Y+a\frac{Z^3}{X^2}+bXZ -3a &= t^5+\frac{a\left(t^{18}+3t^{13}+3t^8+t^3\right)}{t^8}+b(t^{10}+t^5) -3a \\
&= (a+b)t^{10}+(1+3a+b)t^5+at^{-5}.
\end{align*}
Setting \(a=-\frac{1}{2}\) and \(b=\frac{1}{2}\) causes these first two coefficients to vanish, so we see
\[Y-\frac{1}{2}\frac{Z^3}{X^2}+\frac{1}{2}XZ+\frac{3}{2} = -\frac{1}{2}t^{-5}. \]
Thus,
\[\left(Y-\frac{1}{2}\frac{Z^3}{X^2}+\frac{1}{2}XZ+\frac{3}{2} \right) +\frac{1}{4}\frac{Z^2-X^3}{X^3}=-\frac{1}{2}t^{-5}+\frac{1}{4}\frac{2t^7+t^2}{t^{12}} =\frac{1}{4}t^{-10}.\]
Now, let 
\begin{align*}
\alpha &= \left(x, y-\frac{1}{2}\frac{z^3}{x^2}+\frac{1}{2}xz+\frac{3}{2}  +\frac{1}{4}\frac{z^2-x^3}{x^3},z\right) \\
\beta &= (x,y,z+x(-4x^3y)) 
\end{align*}
Then we see 
\begin{align*}
\phi^*\alpha^*\beta^*(z) &= Z-4X^4\left(Y-\frac{1}{2}\frac{Z^3}{X^2}+\frac{1}{2}XZ+\frac{3}{2}  +\frac{1}{4}\frac{Z^2-X^3}{X^3}\right) \\
&= t^{6}+t-4(t^{16})\left(-\frac{1}{4}t^{-10}\right) \\
&= t.
\end{align*}
Thus \(\phi\) is rectifiable by Corollary \ref{cor:criterion}.

\end{proof}

\subsection{Generalizing Bhatwadekar-Roy's results}
The next results were due to Bhatwadekar and Roy \cite{BR}, who studied embeddings of the form \((t^n, t^{an+1}, t+t^l)\) for positive integers \(a,n\) and \(l>n\).  This more general class includes the Abhyankar embeddings mentioned above.  Interstingly, in positive characteristic they were able to show that all such embeddings are rectifiable; in the characteristic zero case we are interested in, they obtained two positive results: when \(l\equiv -1 \pmod n\), and when \(n=4\).  We first generalize the former in Theorem \ref{thm:BRgeneral}, and then provide a new proof of the latter in Theorem \ref{thm:BR2}.

\begin{theorem}\label{thm:BRgeneral}
Let \(m,n \in \IN\) be coprime positive integers. Write \(m \equiv c \pmod n\) for some \(0<c<n\), and set \(d=n-c\). Write \(\lambda_1 c=\mu_1 n -1\) and \(\lambda _2 d = \mu _2 n -1\) for \(\lambda _1, \lambda _2 \in \IN\) and minimal \(\mu_1, \mu_2 \in \IN\). If \(b > \min\{\mu_1, \mu_2\}\), then  the embedding \(\phi = (t^n, t^{m}, t+t^{bn-1})\) is rectifiable.
\end{theorem}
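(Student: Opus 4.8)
The plan is to apply Corollary \ref{cor:criterion}, with the roles of \(y\) and \(z\) interchanged (its proof is symmetric in the two variables), so that it suffices to produce \(l \in \IN\) and \(P, Q \in k[x]^{[1]}\) with
\[ Z + X\,Q\!\left(X^l Y + P(Z)\right) = t. \]
Since \(Z = t + t^{bn-1}\), this is equivalent to \(X\,Q(\mathrm{inner}) = -t^{bn-1}\), where \(\mathrm{inner} = X^l Y + P(Z)\); that is, we must cancel the single high-order term \(t^{bn-1}\) of \(Z\) and produce nothing else. I will record the elementary relations \(X = t^n\), \(t^{bn} = X^b\), and, writing \(m = qn + c\), \(Y = X^q t^c\); the last of these is what gives access to the residue class \(c\) (equivalently \(d = n-c\)) modulo \(n\).

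The heart of the argument is to choose \(l\) and \(P\) so that \(\mathrm{inner}\) collapses to a \emph{single} power of \(t\), namely \(\pm t^{d}\) (when \(\mu_2 = \min\{\mu_1,\mu_2\}\)) or \(\pm t^{c}\) (when \(\mu_1 = \min\{\mu_1,\mu_2\}\)). The mechanism is as follows. Writing \(Z = t(1 + t^{bn-2})\), a power \(Z^{j}\) spreads into the binomial sum \(\sum_i \binom{j}{i} t^{\,j + i(bn-2)}\). The combinatorial Lemma \ref{lem:combinatorial}, after the substitution \(s = t^{bn-2}\) and multiplication by \(t^{p}\), is exactly what is needed to form a \(k[x]\)-linear combination of \(Z^{p}, Z^{p-2}, \dots\) (with coefficients that are powers of \(X^{b}\)) equal to \(t^{p} + \beta\delta_p X^{pb/2} + t^{p(bn-1)}\): the intermediate binomial terms all cancel, and the parity term \(\beta\delta_p X^{pb/2}\) is a pure power of \(X\) that can be absorbed into the \(Z^{0}\)-coefficient of \(P\). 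Taking \(p = d\) realizes \(t^{d} + t^{d(bn-1)}\) (up to such an \(X\)-term) inside \(k[x][Z]\). Finally I tune the term \(X^l Y = t^{ln + m}\) to cancel the top tail \(t^{d(bn-1)}\): this requires \(ln + m = d(bn-1)\), which has a nonnegative integer solution \(l\) precisely because both sides are \(\equiv c \pmod n\) and the right-hand side is large enough. What remains is \(\mathrm{inner} = -t^{d}\). This construction is the content of the forthcoming Lemma \ref{lem:cd}.

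With \(\mathrm{inner} = -t^{d}\) in hand, I would finish by setting \(Q(w) = \pm X^{\,b-\mu_2-1} w^{\lambda_2}\). Using \(\lambda_2 d = \mu_2 n - 1\) and \(t^{n} = X\), one computes \(Q(\mathrm{inner}) = \pm X^{b-\mu_2-1} t^{\mu_2 n - 1} = \pm t^{(b-1)n-1}\), whence \(X\,Q(\mathrm{inner}) = \pm t^{bn-1}\); choosing the sign of \(Q\) appropriately gives exactly \(-t^{bn-1}\), and therefore \(Z + X\,Q(\mathrm{inner}) = t\). The hypothesis \(b > \mu_2\) is used precisely here, to guarantee that the exponent \(b - \mu_2 - 1\) is nonnegative so that \(Q \in k[x]^{[1]}\). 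The case \(\mu_1 = \min\{\mu_1,\mu_2\}\) is handled by the symmetric construction with \(c\) in place of \(d\) and \((\lambda_1, \mu_1)\) in place of \((\lambda_2, \mu_2)\), and the hypothesis \(b > \min\{\mu_1,\mu_2\}\) guarantees that at least one of the two constructions is available.

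The main obstacle is Lemma \ref{lem:cd}, i.e.\ arranging the cancellation so that \(\mathrm{inner}\) is \emph{exactly} a single monomial: powers of \(Z\) introduce many intermediate terms, and it is the palindromic identity of Lemma \ref{lem:combinatorial} that forces them to vanish simultaneously. The remaining bookkeeping --- checking that the tuned exponent \(l\) and the exponent \(b - \mu_2 - 1\) in \(Q\) are genuinely nonnegative under the hypothesis, correctly absorbing the parity term for even \(p\), and organizing the \(c\)/\(d\) dichotomy --- is routine once the single-power form is established, but it is where the numeric hypotheses on \(c, d, \mu_1, \mu_2\) must be tracked carefully.
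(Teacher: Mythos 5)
Your overall strategy is exactly the paper's: the \(y\)-\(z\)-swapped form of Corollary \ref{cor:criterion}, the single-power polynomial \(p\) built from Lemma \ref{lem:combinatorial} (this is indeed Lemma \ref{lem:cd}), the tuning of \(X^lY\) against the tail \(t^{d(bn-1)}\), and the outer map \(Q(w)=\pm x^{b-\mu_2-1}w^{\lambda_2}\) using \(\lambda_2 d=\mu_2 n-1\). Your treatment of the case \(b>\mu_2\) is correct as far as it goes, including the sign bookkeeping. But there are two genuine gaps.

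First, your tuning step needs \(ln+m=d(bn-1)\) to have a solution \(l\geq 0\), and you assert the right-hand side is ``large enough.'' It need not be: writing \(m=an+c\), nonnegativity of \(l\) is equivalent to \(a\leq bd-1\), and \(m\) can be arbitrarily large compared to \(d(bn-1)\) (e.g.\ \(m=100n+c\) with \(b=2\)). The congruence argument only gives divisibility, not nonnegativity. The paper handles this with a preliminary reduction loop that your proposal is missing: when \(a\geq bd-1\), write \(a+1=bd+h\), take \(p\) from Lemma \ref{lem:cd} with \(\phi^*(p)=t^d+t^{(bn-1)d}\), and apply \(\theta=(x,-y+x^hp(x,z),z)\in\GA_3(k)\), which transforms the embedding into \((t^n,t^{hn+d},t+t^{bn-1})\) with strictly smaller middle exponent; iterate until \(a<bd-1\). (Note this reduction swaps the residues \(c\) and \(d\), which is harmless because the hypothesis \(b>\min\{\mu_1,\mu_2\}\) is symmetric in them.)

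Second, the case \(\mu_1=\min\{\mu_1,\mu_2\}\) is \emph{not} obtained by the symmetric construction with \(c\) in place of \(d\). The symmetric tuning would require \(ln+m=c(bn-1)\) for some \(l\geq 0\); but \(c(bn-1)\equiv -c\equiv d\pmod n\) while \(ln+m\equiv c\pmod n\), and since \(\gcd(c,n)=1\), having \(c\equiv d\pmod n\) forces \(n\mid 2\). So for \(n\geq 3\) no such \(l\) exists: the tail \(t^{c(bn-1)}\) lies in the wrong residue class to be cancelled by \(X^lY\). This is precisely why the paper's second case uses a chained, two-term correction rather than a mirror image of the first: with \(p,q\) from Lemma \ref{lem:cd} realizing \(t^d+t^{(bn-1)d}\) and \(t^c+t^{(bn-1)c}\), the inner polynomial is taken to be \(X^{bc+bd-a-2}Y-X^{bc-1}p+q\), where \(X^{bc+bd-a-2}Y\) cancels the tail of \(X^{bc-1}p\), and the leading term of \(X^{bc-1}p\) cancels the tail of \(q\) (since \(bcn-n+d=(bn-1)c\)), leaving exactly \(t^c\). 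Without this asymmetric two-step cancellation, your argument only proves the theorem under the stronger hypothesis \(b>\mu_2\), not \(b>\min\{\mu_1,\mu_2\}\).
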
  

\begin{remark}
When \(c=1\), we obtain one of the results of Bhatwadekar and Roy in \cite{BR}.
\end{remark}

\begin{example}
Let \(n=5\) and \(m=7\), so \(c=2\), in which case \(\mu_1=1\).  Then we see the embedding \((t^5,t^7,t+t^9)\) is rectifiable.
\end{example}

More generally, we have
\begin{corollary}
Let \(n \in \IN\) be odd.  Then for any \(b>1\), the embedding \((t^n,t^{n+2}, t+t^{bn-1})\) is rectifiable.
\end{corollary}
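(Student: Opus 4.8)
The plan is to derive the corollary as a direct specialization of Theorem~\ref{thm:BRgeneral}. With $n$ odd and $m=n+2$, I first reduce to the hypotheses of that theorem and then check that $\mu_1=1$, so that the condition $b>\min\{\mu_1,\mu_2\}$ becomes simply $b>1$.

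First I would verify coprimality and identify $c$. Since $m=n+2$, we have $\gcd(m,n)=\gcd(n+2,n)=\gcd(2,n)$, which equals $1$ precisely because $n$ is odd; so $m$ and $n$ are coprime as required. Reducing modulo $n$ gives $m=n+2\equiv 2\pmod n$, and since $n$ is odd we have $n\geq 3$, so $0<2<n$ and thus $c=2$ (and $d=n-2$). The remaining arithmetic input is the pair $(\lambda_1,\mu_1)$ defined by $\lambda_1 c=\mu_1 n-1$ with $\mu_1$ minimal, i.e.\ $2\lambda_1=\mu_1 n-1$.

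The key computation is showing $\mu_1=1$. Taking $\mu_1=1$ requires solving $2\lambda_1=n-1$, i.e.\ $\lambda_1=\tfrac{n-1}{2}$; since $n$ is odd this is a positive integer, so $\mu_1=1$ is attainable, and as $\mu_1\geq 1$ always (because $\mu_1 n-1=\lambda_1 c>0$ forces $\mu_1\geq 1$), minimality gives $\mu_1=1$. Consequently $\min\{\mu_1,\mu_2\}\leq\mu_1=1$, so the hypothesis $b>\min\{\mu_1,\mu_2\}$ of Theorem~\ref{thm:BRgeneral} is implied by $b>1$. Applying the theorem with these values of $m$, $n$, $c$, $d$ then yields that $\phi=(t^n,t^{n+2},t+t^{bn-1})$ is rectifiable for every $b>1$.

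This corollary is essentially bookkeeping: there is no genuine obstacle, since all the work is hidden inside Theorem~\ref{thm:BRgeneral}. The only point demanding any care is the minimality clause defining $\mu_1$ --- one must confirm both that $\mu_1=1$ gives an integral $\lambda_1$ (which is exactly where the oddness of $n$ is used) and that no smaller positive $\mu_1$ exists, which is automatic. I would state the proof in a few lines, recording $c=2$ and $\mu_1=1$ explicitly and then invoking the theorem.
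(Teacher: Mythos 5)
Your proposal is correct and follows essentially the same route as the paper: observe that \(c=2\), use the oddness of \(n\) to get \(\lambda_1=\frac{n-1}{2}\in\IN\) and hence \(\mu_1=1\), and invoke Theorem \ref{thm:BRgeneral}; your write-up merely makes explicit the coprimality check and the minimality of \(\mu_1\), which the paper leaves implicit. The only quibble is your claim that \(n\) odd forces \(n\geq 3\) (false for \(n=1\), where the theorem's condition \(0<c<n\) cannot hold), but that case is trivially rectifiable since \(x\mapsto t\), and the paper's own one-line proof glosses over it identically.
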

\begin{proof}
Note that \(c=2\), so since \(n\) is odd, \(n-1\) is even and thus \(\mu_1=1\).
\end{proof}

%
In order to prove Theorem \ref{thm:BRgeneral}, we first prove the following lemma.
\begin{lemma}\label{lem:cd} Suppose \(\phi=(t^n, t^m, t^{bn-1})\) is an embedding.  Then for any \(r \in \IN\) there exits \(p \in k[x,z]\) such that \(\phi^*(p)=t^r+t^{(bn-1)r}\).
\end{lemma}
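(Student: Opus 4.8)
The plan is to exploit the two-term structure of $\phi^*(z) = t + t^{bn-1}$ together with the combinatorial Lemma~\ref{lem:combinatorial}. Writing $Z = \phi^*(z) = t + t^{bn-1} = t(1+t^{bn-2})$ and $X = \phi^*(x) = t^n$, I would first record the key substitution $s := t^{bn-2}$, so that $Z = t(1+s)$ and, crucially, $t^{bn} = X^b$. The target then factors as
\[ t^r + t^{(bn-1)r} = t^r\left(1 + (t^{bn-2})^r\right) = t^r(1 + s^r), \]
which is exactly the right-hand side of Lemma~\ref{lem:combinatorial} (with $m=r$), up to the exceptional middle term and the overall factor $t^r$. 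Equivalently, expanding $Z^r$ by the binomial theorem produces $t^r$ and $t^{(bn-1)r}$ as its two extreme terms, and the combinatorial lemma is precisely the tool that will let me cancel the intermediate terms.

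The core steps are as follows. First, I would apply Lemma~\ref{lem:combinatorial} with $m = r$ to obtain $\alpha_0, \ldots, \alpha_{\floor{\frac{r}{2}}}, \beta \in k$ with
\[ 1 + s^r = \sum_{i=0}^{\floor{\frac{r}{2}}} \alpha_i s^i (1+s)^{r-2i} - \beta\delta_r s^{r/2}. \]
Multiplying through by $t^r$ puts $t^r + t^{(bn-1)r}$ on the left. The key computation is that every term on the right collapses to a monomial in $X$ and $Z$: using $t(1+s) = Z$ and $t^{bn} = X^b$ one checks that $t^r s^i (1+s)^{r-2i} = X^{bi} Z^{r-2i}$ and that the exceptional term satisfies $t^r s^{r/2} = X^{br/2}$. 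Hence setting
\[ p = \sum_{i=0}^{\floor{\frac{r}{2}}} \alpha_i\, x^{bi} z^{r-2i} - \beta\delta_r\, x^{br/2} \in k[x,z] \]
yields $\phi^*(p) = t^r + t^{(bn-1)r}$, as required.

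I expect the only routine work to lie in the exponent bookkeeping behind the key computation $t^r s^i(1+s)^{r-2i} = X^{bi}Z^{r-2i}$, namely verifying the identity $r + (bn-2)i = nbi + (r-2i)$ of outer exponents and confirming that all exponents appearing ($bi$, $r-2i$, and $br/2$) are non-negative integers — the last requiring $r$ to be even for the $s^{r/2}$ term, which is exactly where $\delta_r$ intervenes. The genuinely hard part, the cancellation of all the intermediate binomial terms of $Z^r$ so that only the two extreme powers $t^r$ and $t^{(bn-1)r}$ survive, has already been isolated into Lemma~\ref{lem:combinatorial}; once that is in hand, the present lemma is essentially a change of variables. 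Note that $X$ (but not $Y = \phi^*(y) = t^m$) is used, consistent with the conclusion $p \in k[x,z]$.
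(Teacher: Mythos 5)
Your proposal is correct and matches the paper's own proof essentially verbatim: both apply Lemma \ref{lem:combinatorial} with \(m=r\), define \(p = \sum_{i} \alpha_i x^{bi}z^{r-2i} - \beta\delta_r x^{br/2}\), and verify \(\phi^*(p) = t^r + t^{(bn-1)r}\) via the substitution \(s = t^{bn-2}\) and the exponent identity \(r+(bn-2)i = nbi + (r-2i)\). You also correctly read the lemma's hypothesis as \(\phi^*(z) = t + t^{bn-1}\) (the statement's \(t^{bn-1}\) is a typo), which is what the paper's proof uses.
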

\begin{proof}
We begin by applying Lemma \ref{lem:combinatorial}.  Recall that \(\delta _r = \begin{cases} 1 & r \text{ is even} \\ 0 & r \text{ is odd}\end{cases}\); then by Lemma \ref{lem:combinatorial}, there exist \(\alpha _0, \ldots, \alpha _{ \floor{\frac{r}{2}}}, \beta \in k\) such that, for any \(s \in k[t]\),
\begin{equation}\label{eq:s}
 \sum _{i=0} ^\floor{ \frac{r}{2}}  \alpha _i s^i(1+s)^{r-2i} = 1+ \beta \delta _{r} s^{\frac{r}{2}}+s^{r} .
 \end{equation}
Now we set \(\displaystyle p=  \sum _{i=0} ^\floor{\frac{r}{2}} \alpha _i x^{bi}z^{r-2i} -\beta \delta _{r} x^{b \frac{r}{2}} \in k[x,z]\), and we compute
\begin{align*}
\phi^*(p)&= \sum _{i=0} ^\floor{\frac{r}{2}} \alpha _i (t^n)^{bi}(t+t^{bn-1})^{r-2i} - \beta \delta _{r} (t^n)^{b \frac{r}{2}}  \\
&= \sum _{i=0} ^\floor{\frac{r}{2}} \alpha _i t^{nbi+r-2i}(1+t^{bn-2})^{r-2i} - \beta \delta _{r} t^{bn \frac{r}{2}} \\
&= t^{r}\left(\sum _{i=0} ^\floor{\frac{r}{2}} \alpha _i \left(t^{bn-2}\right)^i(1+t^{bn-2})^{r-2i} - \beta \delta _{r} \left(t^{bn-2}\right)^{\frac{r}{2}}\right)
\end{align*}
Substituting \(s=t^{bn-2}\) into \eqref{eq:s} above, we obtain
\[\phi^*(p)=t^{r}\left(1+\left(t^{bn-2}\right)^{r}\right)=t^{r}+t^{(bn-1)r}.\]
\end{proof}

\begin{proof}[Proof of Theorem \ref{thm:BRgeneral}]
Write \(m=an+c\) for some \(a \in \IN\).  We begin by supposing \(a \geq bd-1\), so \(a+1=bd+h\) for some \(h \in \IN\).  In this case, we can write \(m=(a+1)n-d=(bn-1)d+hn\).  By Lemma \ref{lem:cd}, there exists \(p \in k[x,z]\) such that \(\phi^*(p)=t^d+t^{(bn-1)d}\).  Then, setting \(\theta = (x,-y+x^hp(x,z),z) \in \GA_3(k)\), we see that 
\(\theta \phi = (t^n, t^{hn+d},t+t^{bn-1})\), and this is rectifiable if and only if \(\phi\) is rectifiable. Note that since \(b\geq 2\), \(hn+d < (bn-1)d+hn=m\).  Thus, by repeating this process we may continue until we have \(a < bd-1\). 

So now it suffices to assume \(a <bd-1\).  We proceed in two cases; first, we suppose \(b > \mu _2\). By Lemma \ref{lem:cd}, there exists \(p \in k[x,z]\) such that \(\phi^*(p)=t^d+t^{(bn-1)d}\).
We then set \(\alpha = \left(x,y-\frac{p(x,z)}{x^{bd-a-1}},z\right) \) and \(\beta = \left( x,y,z+ x^{b-\mu _2}\left(x^{bd-a-1}y\right)^{\lambda _2} \right)\), and compute
%
%
%
\begin{align*}
\phi^* \alpha^* \beta^*(z) &=  Z+ X^{b-\mu _2}(X^{bd-a-1}Y-p(X,Z))^{\lambda _2} \\
&= t+t^{bn-1} + \left(t^n\right)^{b-\mu _2} \left( (t^n)^{bd-a-1} t^{an+c} - \left( t^{d}+t^{(bn-1)d}\right) \right)^{\lambda _2} \\
&=t+t^{bn-1}+t^{n(b-\mu _2)}\left(t^{bdn-n+c} - t^{d}-t^{(bn-1)d} \right)^{\lambda _2} \\
&= t+t^{bn-1} +t^{nb-\lambda _2 d - 1}(-t^{\lambda _2 d}) \\
&= t.
\end{align*}
Then by Corollary \ref{cor:criterion}, \(\phi\) is rectifiable.

Now, we must consider the case \(b > \mu _1\).  Then by Lemma \ref{lem:cd}, find \(q \in k[x,z]\) such that \(\phi^*(q)=t^c+t^{(bn-1)c}\), and as above, \(p \in k[x,z]\) such that \(\phi^*(p)=t^d+t^{(bn-1)d}\).

Now set \(\alpha = \left(x,y-\frac{p(x,z)}{x^{bd-a-1}}+\frac{q}{x^{bc+bd-a-2}},z\right) \) and \(\beta = \left( x,y,z- x^{b-\mu _1}\left(x^{bc+bd-a-2}y\right)^{\lambda _1} \right)\), and compute
\begin{align*}
\phi^* \alpha^* \beta^*(z) &=  Z- X^{b-\mu _1}(X^{bc+bd-a-2}Y-X^{bc-1}p(X,Z)+q(X,Z))^{\lambda _1} \\
&= t+t^{bn-1} - \left(t^n\right)^{b-\mu _1} \left( (t^n)^{bc+bd-a-2} t^{an+c} -(t^n)^{bc-1} \left( t^{d}+t^{(bn-1)d}\right)+\left(t^c+t^{(bn-1)c}\right) \right)^{\lambda _1} \\
&=t+t^{bn-1}-t^{n(b-\mu _1)}\left(t^{bcn+bdn-2n+c} - t^{bcn-n+d}-t^{bnc-n+(bn-1)d}+t^c+t^{(bn-1)c} \right)^{\lambda _1} \\
&= t+t^{bn-1} -t^{nb-\lambda _1 c - 1}(t^{\lambda _1 c}) \\
&= t.
\end{align*}
Then again by Corollary \ref{cor:criterion}, \(\phi\) is rectifiable.
\end{proof}

We next give a new proof of the other result of \cite{BR}.
\begin{theorem}[Bhatwadekar-Roy] \label{thm:BR2}
\(\phi = (t^4, t^{4a+1}, t^m+t)\) is rectifiable for any \(a,m \in \IN\).
\end{theorem}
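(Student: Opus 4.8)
The plan is to apply Corollary \ref{cor:criterion}, writing $X=t^4$, $Y=t^{4a+1}$, $Z=t^m+t$ as in the preceding proofs. The guiding observation is that $\phi^*(k[x,y])=k[t^4,t^{4a+1}]$ is the semigroup algebra on the numerical semigroup $S=\langle 4,4a+1\rangle$, and that $\gcd(4,4a+1)=1$. Writing $j\equiv m\pmod 4$ with $j\in\{0,1,2,3\}$, the monomial $t^m$ equals the Laurent monomial $Y^jX^{i}$ with $i=(m-(4a+1)j)/4\in\IZ$. When $i\geq 0$ (equivalently $m\in S$, which holds for every $m$ past the Frobenius number $12a-1$), the polynomial $z-x^iy^j$ is a coordinate with $\phi^*(z-x^iy^j)=Z-t^m=t$, so Lemma \ref{lem:coordinate} finishes immediately. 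Hence all the content lies in the finitely many gap values $m\notin S$, where $j\in\{1,2,3\}$ and $i<0$.

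For $j=1$ I would dispose of the gaps by a Euclidean-type descent that avoids denominators altogether. The value $m=1$ is trivial (then $Z=2t$), so assume $m\geq 5$ with $m<4a+1$; set $l=(4a+1-m)/4\geq 1$ and apply the elementary automorphism $\theta=(x,\,x^lz-y,\,z)\in\GA_3(k)$. Since $X^lZ=t^{4a+1}+t^{4l+1}$, one computes $\theta\phi=(t^4,t^{4l+1},t^m+t)$, i.e. the same family with $a$ replaced by $a'=l<a$. Iterating strictly decreases $a$, so after finitely many steps $m\geq 4a'+1$, i.e. $m\in S$, and we conclude as above. (The case $j=0$ is immediate, as $t^m=X^{m/4}$.)

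The remaining cases $j=2$ and $j=3$ are the genuine obstacle, and here I would use the full strength of Corollary \ref{cor:criterion} by generalizing the computation of Theorem \ref{thm:Craighero2} (which is exactly the instance $a=1$, $m=6$, $j=2$). The target is a division automorphism $\alpha$ (in the $y\leftrightarrow z$ symmetric form permitted by Corollary \ref{cor:criterion} and used by Craighero) built from powers of $Z=t(1+t^{m-1})$, powers of $X$, and suitable powers of $Y$, arranged so that $\phi^*$ of the modified variable collapses to a single negative power of $t$, of the form $c\,t^{-e}$ with $e\equiv -m\pmod 4$; the multiplication automorphism $\beta$ then multiplies by the compensating power $X^{(m+e)/4}$ (and a scalar) to cancel the $t^m$ in $Z$ and leave $t$, whereupon Theorem \ref{thm:cleardenominators} clears the denominators. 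To force all the intermediate powers of $t$ to vanish I would expand $Z^k=t^k(1+t^{m-1})^k$ by the binomial theorem and choose the coefficients through Lemma \ref{lem:combinatorial} with $s=t^{m-1}$, exactly as in Lemma \ref{lem:cd}; the extra factors of $Y$ enter to repair the residues modulo $4$ that prevent the naive aligning exponents $(m+1)i/4$ from being integral. For $m\equiv 3\pmod 4$ the relevant expression is precisely the one produced by Lemma \ref{lem:cd}, so large such $m$ are covered by the argument of Theorem \ref{thm:BRgeneral}, and only small gaps (such as $m=3$) would need separate checking.

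The step I expect to be hardest is this last construction's bookkeeping: verifying that, after inserting the $Y$-corrections, the required powers of $X$ are non-negative exactly where they must be (so that $\beta$ and the resulting $\gamma$ stay of the admissible forms in Theorem \ref{thm:cleardenominators}), and that the combination really does collapse to a single monomial. As Theorem \ref{thm:Craighero2} already illustrates, the usable exponent is not $(m+e)/4$ for the first available $e$ but typically forces doubling $e$ (there $t^{-5}$ must be pushed to $t^{-10}$) in order to make $(m+e)/4$ integral; carrying this out uniformly in $a$ and $m$ for each residue class $j\in\{2,3\}$ is the crux of the proof.
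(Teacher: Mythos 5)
Your cases \(m \equiv 0\) and \(m \equiv 1 \pmod 4\) are complete and coincide with the paper's own argument (the paper's Case 1 descent is \((x,-y+x^{a-k}z,z)\), which is exactly your \(\theta\) with \(l=a-k\)). Your treatment of \(m \equiv 3 \pmod 4\) is a genuinely different and attractive route: with \(n=4\), \(c=1\), \(d=3\) one gets \(\mu_1=\mu_2=1\) in Theorem \ref{thm:BRgeneral}, so every \(m=4b-1\) with \(b\geq 2\) is covered, replacing the paper's direct computation in its Case 3. However, you do leave \(m=3\) open, and it genuinely needs an argument (the paper's Case 3 with \(k=0\) supplies one: after descending to \(a\leq 2\), one checks \(Z-\left(Z^3-X^{2-a}Y-3XZ\right)=t\)); this gap is small but real.

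The serious gap is \(m \equiv 2 \pmod 4\), which is precisely the case the paper itself calls the hardest and which occupies most of its proof; for it you offer only a program whose bookkeeping you yourself flag as unresolved, and that program is not obviously workable. The alignment trick in Lemma \ref{lem:cd} depends crucially on the top exponent being \(\equiv -1 \pmod n\): there, trading \(z^{2i}\) for \(x^{bi}\) multiplies the image by exactly \(t^{(bn-2)i}=s^i\), so pure powers of \(x\) realize the needed shifts. With \(s=t^{m-1}\) and \(m\equiv 2 \pmod 4\), the needed shift \(t^{2i}s^i=t^{(m+1)i}\) has \((m+1)i\equiv 3i \pmod 4\), so one is forced into \(y\)-factors \(y^j\) with \(j\equiv 3i \pmod 4\) subject to the nonnegativity constraint \(((m+1)i-(4a+1)j)/4\geq 0\), which fails for small \(i\) when \(a\) is large; on top of that, Lemma \ref{lem:combinatorial} leaves an extra middle term \(\beta\delta_r s^{r/2}\) for even \(r\) that must also be realized. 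None of this is carried out. The paper instead proves this case by a bespoke generalization of Theorem \ref{thm:Craighero2}: writing \(m=4k+2\) and using \(Z^2-X^{2k+1}=2t^{4k+3}+t^2\), it establishes
\[Y-\frac{1}{2}\frac{Z^3}{X^{2k+1-a}}+\frac{1}{2}X^aZ+\frac{3}{2}X^{a-k}+\frac{1}{4}\frac{Z^2-X^{2k+1}}{X^{3k-a+1}} = \frac{1}{4}\frac{1}{t^{12k-4a+2}},\]
then applies \(\beta=\left(x,y,z+x^{k}\left(-4x^{3k-a+1}y\right)\right)\) and Corollary \ref{cor:criterion}, with a separate descent and subcase analysis when \(a>3k+1\). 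Until you supply an explicit construction of this kind, uniform in \(a\) and \(k\), the theorem remains unproved for \(m\equiv 2\pmod 4\), i.e.\ at the crux of the result.
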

\begin{proof}
We divide the proof into four cases, based on the residue of \(m\) modulo \(4\).  Three cases are relatively straightforward, while the case \(m \equiv 2 \pmod 4\) generalizes our proof of Theorem \ref{thm:Craighero2} above.

\noindent \textbf{Case 1: \(m \equiv 1 \pmod 4\).} Write \(m=4k+1\) for some \(k \in \IN\).  Without loss of generality, we may assume \(a < k\); for if \(a \geq k\), we may apply the map \((x,-y+x^{a-k}z,z)\) to produce the embedding \((t^4, t^{4(a-k)+1},t^m+t)\), and repeat until \(a <k\).  But in this case, observe that \(Z-X^{k-a}Y=t\), and \(z-x^{k-a}y\) is a coordinate.  

\noindent \textbf{Case 2: \(m \equiv 2 \pmod 4\).} Write \(m=4k+2\) for some \(k \in \IN\).  This is the hardest case, but proceeds in the same way as our proof of Theorem \ref{thm:Craighero2}.  First, we observe that
\[Z^2-X^{2k+1}=2t^{4k+3}+t^2.\]

\begin{align*}
Y+c\frac{Z^3}{X^{2k+1-a}}+dX^{a}Z-3cX^{a-k} &=(c+d)t^{4a+4k+2}+(1+3c+d)t^{4a+1}+c\frac{1}{t^{8k-4a+1}}
\end{align*}
Setting \(c=-\frac{1}{2}\) and \(d=\frac{1}{2}\) causes these first two coefficients to vanish, so we see
\[Y-\frac{1}{2}\frac{Z^3}{X^{2k+1-a}}+\frac{1}{2}X^aZ+\frac{3}{2}X^{a-k} = -\frac{1}{2}\frac{1}{t^{8k-4a+1}} \]

Thus
\[Y-\frac{1}{2}\frac{Z^3}{X^{2k+1-a}}+\frac{1}{2}X^aZ+\frac{3}{2}X^{a-k} +\frac{1}{4} \frac{Z^2-X^{2k+1}}{X^{3k-a+1}} = \frac{1}{4}\frac{1}{t^{12k-4a+2}} \]

Now, let 
\begin{align*}
\alpha &= \left(x, \left(y-\frac{1}{2}\frac{z^3}{x^{2k+1-a}}+\frac{1}{2}x^az+\frac{3}{2}x^{a-k} \right) +\frac{1}{4}\frac{z^2-x^3}{x^{3k-a+1}},z\right) 
\end{align*}

Note that if \(a > 3k+1\), then \(\alpha \in \EA_3(k)\).  And in this case, we have \(12k-4a+2=4(3k-a+1)-2<0\); so then we have \(\alpha \phi= (t^4, \frac{1}{4}t^{4a-(12k+2)},t^{m}+t) \), with \(4a-(12k+2)< 4a+1\). Now, if \( k\geq a-3k-1\), we can set \(\beta = (x,y,z-4x^{4k-a+1}y) \in \EA_3(k)\) , and compute \( \phi^*\alpha^*\beta^*(z) = t\), and as \(\beta \alpha \in \GA_3(k)\), \(\phi\) is rectifiable by Lemma \ref{lem:coordinate}.  If instead \(k < a-3k-1\), we set \(\beta = (x,-4y+x^{a-4k-1}z,z) \in \GA_3(k)\), and compute that \(\beta \alpha \phi = (t^4, t^{4(a-4k-1)+1},t^m+t)\).  

This process can be repeated, so we may now proceed assuming without loss of generality that \(a \leq 3k+1\).  We construct \(\alpha\) as above (but now, \(\alpha \in \EA_2(k[x,x^{-1}])\)), and further define 
\begin{align*}
\beta &=  \left(x,y,z+x^{k}(-4x^{3k-a+1}y)\right)
\end{align*}
Then we see 
\begin{align*}
\phi^*\alpha^*\beta^*(z) &= Z-4X^{4k-a+1}\left(Y-\frac{1}{2}\frac{Z^3}{X^{2k+1-a}}+\frac{1}{2}X^aZ+\frac{3}{2}X^{a-k}  +\frac{1}{4}\frac{Z^2-X^{2k+1}}{X^{3k-a+1}}\right) \\
&= t^{4k+2}+t-4(t^{16k-4a+4})\left(\frac{1}{4}t^{-12k+4a-2}\right) \\
&= t.
\end{align*}
Thus \(\phi\) is rectifiable by Corollary \ref{cor:criterion}.

%
%

\noindent \textbf{Case 3: \(m \equiv 3 \pmod 4\).} Write \(m=4k+3\) for some \(k \in \IN\). Without loss of generality, we may assume \(a \leq 3k+2\); for if \(a >3k+2\), we set \(\alpha = (x,-y+x^{a-2-3k}z^3-3x^{a-2k-1}z,z) \in \GA_3(k)\) and compute
\begin{align*}
\phi^*\alpha^*(y) &= -Y+X^{a-2-3k}Z^3-3X^{a-2k-1}Z \\
&=-t^{4a+1}+t^{4(a-2-3k)}(t^{4k+3}+t)^3 -3t^{4a-8k-4}(t^{4k+3}+t)\\
&= -t^{4a+1}+t^{4a-8-12k}\left(t^{12k+9}+3t^{8k+7}+3t^{4k+5}+t^3\right) - 3t^{4a-4k-1}-3t^{4a-8k-3} \\
&= t^{4a-12k-5}
\end{align*}
This process can  be repeated until \(a \leq 3k+2\).

Now, assuming \(a \leq 3k+2\), we set
\(\alpha = \left(x,y-\frac{z^3}{x^{3k+2-a}}-3x^{a-2k-1}z,z\right)\) and \(\beta = (x,y,z+x^k(-x^{3k+2-a}y))\),
and compute
\begin{align*}
\phi^*\alpha^*\beta^*(z) &= Z-X^K\left(Z^3-X^{3k-a+2}Y-3X^{k+1}Z\right)  \\
&=t+t^{4k+3}-t^{4k}\left( (t^{4k+3}+t)^3-t^{4(3k-a+2)}(t^{4a+1})-3t^{4k+4}(t^{4k+3}+t)\right) \\
&= t+t^{4k+3}-t^{4k}\left(t^{12k+9}+3t^{8k+7}+3t^{4k+5}+t^3 - t^{12k+9}-3t^{8k+7}-3t^{4k+5}\right)  \\
&= t+t^{4k+3}-t^{4k}\left(t^3 \right) \\
&= t.
\end{align*}
Thus \(\phi\) is rectifiable by Corollary \ref{cor:criterion}.

\noindent \textbf{Case 4: \(m \equiv 0 \pmod 4\).} In this case, \(m=4k\) for some \(k \in \IN\), so \(Z-X^k=t\), and \(z-x^k\) is a coordinate.
\end{proof}

\subsection{Generalizing Kuroda's result}
%
This section is devoted to proving the following theorem.
\begin{theorem}\label{thm:Kurodageneral}
Let \(n,a,c,l,s \in \IN\) such that \(cl<a\).  Then the embedding \((t^n, t^{an+c}, t+t^{(an+c)s-ln})\) is rectifiable.
\end{theorem}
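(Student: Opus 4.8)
The plan is to apply Corollary~\ref{cor:criterion} directly, mirroring the strategy used throughout Section~\ref{sec:unified}. I would set $X = t^n$, $Y = t^{an+c}$, and $Z = t + t^{(an+c)s - ln}$, and seek automorphisms $\alpha = (x,y,z+x^{-l}P(y,z))$ and $\beta = (x,y+xQ(x^lz),z)$ over $k[x,x^{-1}]$ such that $\phi^*\alpha^*\beta^*(y) = t$ (or with the roles of the constructed coordinate shifted appropriately). Since the target exponent $(an+c)s - ln$ has the shape of a product $Y^s$ divided by $X^{ln}$-type denominators, the natural first move is to look at the combination $Z - X^{-l}Y^s$ in the function field: one computes
\begin{align*}
Z - \frac{Y^s}{X^{l}} &= \left(t + t^{(an+c)s - ln}\right) - \frac{t^{(an+c)s}}{t^{ln}} \\
&= \left(t + t^{(an+c)s - ln}\right) - t^{(an+c)s - ln} \\
&= t.
\end{align*}
This suggests that $z - x^{-l}y^s$ already maps to $t$, and the whole difficulty is reduced to expressing this as a legitimate (strongly residual) coordinate after clearing denominators.

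The key steps, in order, would be as follows. First, verify the elementary identity above to confirm that the embedding maps the rational expression $z - x^{-l}y^s$ to $t$. Second, package this into the form required by Corollary~\ref{cor:criterion}: I would take $\alpha$ to absorb the $x^{-l}$ denominator, setting it up so that $\alpha^*$ produces the term $x^{-l}y^s$ (or a closely related monomial), and $\beta$ to be the elementary map whose $y$-component, after pulling back through $\phi^*\alpha^*$, yields exactly $t$. The hypothesis $cl < a$ is what must guarantee that the bookkeeping of exponents stays in the admissible range, ensuring that the denominators appearing are genuinely of the form $x^{-l}$ with $P \in k[x][y]$ and that the map $\beta = (x,y+xQ(x^lz),z)$ has the prescribed shape. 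Third, once $\phi^*\alpha^*\beta^*(y) = t$ is confirmed, invoke Corollary~\ref{cor:criterion} to conclude rectifiability.

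The main obstacle I anticipate is the second step: arranging the automorphisms $\alpha$ and $\beta$ so that they \emph{exactly} fit the template $\alpha = (x,y,z+x^{-l}P(y))$ and $\beta = (x,y+xQ(x^lz),z)$ demanded by Corollary~\ref{cor:criterion}, rather than some superficially similar but formally inadmissible pair. In the simple computation above the coordinate $z - x^{-l}y^s$ is expressed in $z$ and $y$, so one must carefully decide which variable plays the role of the ``$z$'' slot (where the $x^{-l}$ denominator is cleared) versus the ``$y$'' slot (which maps to $t$). The condition $cl < a$ almost certainly enters precisely here, controlling the degrees so that the strongly-residual structure is preserved; tracking the interplay between $s$, $l$, $a$, and $c$ through the binomial or Taylor-type expansion that Theorem~\ref{thm:cleardenominators} uses to clear denominators is where the real care is needed. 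Once the template is matched and the inequality is seen to provide the needed slack, the remainder is a routine application of the machinery already established.
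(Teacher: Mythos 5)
Your first step is exactly right, and it is the paper's own starting point: the identity $Z - Y^s/X^l = t$ gives the paper's $\alpha = \left(x,y,z-\frac{y^s}{x^l}\right)$. But the plan you build on it --- fitting things into Corollary~\ref{cor:criterion} so that $\phi^*\alpha^*\beta^*(y)=t$ --- cannot be carried out, and the obstacle you flag as ``where the real care is needed'' is in fact fatal to that plan rather than a bookkeeping issue. The expression mapping to $t$ lives in the $z$-slot, which is precisely the slot that the denominator-clearing map $\gamma$ of Theorem~\ref{thm:cleardenominators} modifies; the corollary only applies when the $y$-slot (which $\gamma$ fixes) maps to $t$. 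And no admissible $\beta=(x,y+xQ(x^lz),z)$ can achieve that here: with $P(y)=-y^s$ one has $X^lZ+P(Y)=t^{ln+1}$, so
\[
\phi^*\alpha^*\beta^*(y) \;=\; t^{an+c} + t^n\,Q\!\left(t^{ln+1}\right),
\]
and every monomial of $t^nQ(t^{ln+1})$ has $t$-degree of the form $n(i+1)+j(ln+1) \geq n$, so this can equal $0$ but never $t$ when $n\geq 2$. Swapping which variable carries the denominator does not help either, since your identity perturbs $z$ by a function of $y$, not the reverse.

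The missing mechanism, which is how the paper proceeds, is to aim for $0$ rather than $t$ in the $y$-slot: take $\beta = \left(x,\,y-x^{a-cl}(x^lz)^c,\,z\right)$ --- this is where the hypothesis $cl<a$ actually enters, making the prefactor $x^{a-cl}$ a genuine polynomial --- and check that $\phi^*\alpha^*\beta^*(y) = t^{an+c}-t^{n(a-cl)+c(ln+1)} = 0$ while $\phi^*\alpha^*\beta^*(z)=t$, i.e.\ $\beta\alpha\phi=(t^n,0,t)$. Then one applies Theorem~\ref{thm:cleardenominators} directly (not the corollary), and crucially its ``Moreover'' clause: since $P(y)=-y^s\in(y^s)$, the correction satisfies $R(y)\in(y^s)$, so $\gamma^*(z)=z+x^{-l}R(y)$ pulls back under $\beta\alpha\phi$ to $t + t^{-ln}R(0)\cdot(\cdots) = t$ precisely because the $y$-image is $0$. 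The coordinate is then $f=(\gamma\beta\alpha)^*(z)$, and one concludes with Lemma~\ref{lem:coordinate}. In short, the ideal-membership refinement of Theorem~\ref{thm:cleardenominators}, which your proposal never invokes, is not a technicality but the exact substitute for Corollary~\ref{cor:criterion} in this theorem; without it there is no ``routine application of the machinery'' to finish your argument.
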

\begin{remark}Kuroda \cite{Kuroda} proved the special case of \(c=1\) and \(a=2l+m\) for some nonnegative integer \(m\) (in which case the assumption \(cl<a\) is satisfied automatically).
\end{remark}
\begin{proof}[Proof of Theorem \ref{thm:Kurodageneral}]

Let \(\alpha = \left(x,y,z-\frac{y^s}{x^l}\right)\) and \(\beta = \left(x,y-x^{a-cl}(x^lz)^c,z)\right) \)
We compute
\begin{align*}
\phi^*\alpha^*\beta^*(y) &= Y-X^{a-cl}\left(X^lZ-Y^s\right)^c \\
&=t^{an+c}-t^{n(a-cl)}\left(t^{ln}(t+t^{(an+c)s-ln})-t^{(an+c)s}\right)^c \\
&=t^{an+c}-t^{n(a-cl)}\left(t^{ln+1}\right)^c \\
&= t^{an+c}-t^{n(a-cl)+c(ln+1)} \\
&=0
\end{align*}
It is also easy to see that \(\phi^*\alpha^*\beta^*(z)=t\), so we have \(\beta \alpha \phi = (t^n,0,t)\). Now, from Theorem \ref{thm:cleardenominators}, we can produce \(\gamma = (x,y,z+\frac{R(y)}{x^l})\) such that \(\gamma \beta \alpha \in \GA_2(k[x])\), and \(R(y) \in (y^s)k[x,y]\), so that \((\gamma \beta \alpha \phi)^*(z)=(\beta \alpha \phi)^*(z+\frac{R(y)}{x^l})=t\).  Then letting \(f=(\gamma \beta \alpha)^*(z)\), we have \(f\) is a coordinate with \(\phi^*(f)=t\), so \(\phi\) is rectifiable by Lemma \ref{lem:coordinate}.
\end{proof}

Letting \(c=1,a=3,l=2\), we obtain the following:
\begin{corollary}For any \(n \in \IN\), the embedding
\(\left(t^n, t^{3n+1}, t^{4n+2}+t\right)\) is rectifiable.
\end{corollary}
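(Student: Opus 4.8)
The plan is to obtain this corollary as a direct specialization of Theorem \ref{thm:Kurodageneral}, so the entire task reduces to choosing the parameters \(n,a,c,l,s\) correctly and verifying the single hypothesis \(cl<a\). The statement already suggests \(c=1\), \(a=3\), and \(l=2\); the one value left to determine is \(s\), which I would recover by matching the exponent appearing in the third coordinate.

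Concretely, I would match the three coordinates of the target embedding \((t^n, t^{3n+1}, t^{4n+2}+t)\) against the general form \((t^n, t^{an+c}, t+t^{(an+c)s-ln})\) of Theorem \ref{thm:Kurodageneral}. The first coordinate agrees automatically. The second requires \(an+c=3n+1\), which holds for \(a=3\), \(c=1\) and every \(n\). The third requires \((an+c)s-ln=4n+2\); substituting \(an+c=3n+1\) and \(l=2\) gives \((3n+1)s-2n=4n+2\), i.e. \((3n+1)s=2(3n+1)\), hence \(s=2\). The important point is that this forces \(s=2\) independently of \(n\), so a single choice of parameters works uniformly across the family.

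It then remains only to check the hypothesis of Theorem \ref{thm:Kurodageneral}, namely \(cl<a\); here \(cl=1\cdot 2=2<3=a\), which holds (again independently of \(n\)). With all hypotheses verified, Theorem \ref{thm:Kurodageneral} applies and yields rectifiability of \((t^n,t^{3n+1},t^{4n+2}+t)\) for every \(n\in\IN\). There is no genuine obstacle in this argument: the only point requiring any care is confirming that the value of \(s\) backed out from the third exponent is a positive integer and is the same for all \(n\), which the computation above settles.
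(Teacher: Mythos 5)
Your proposal is correct and is exactly the paper's argument: the corollary is stated as an immediate specialization of Theorem \ref{thm:Kurodageneral} with \(c=1\), \(a=3\), \(l=2\) (and implicitly \(s=2\), which your exponent-matching computation makes explicit), with the hypothesis \(cl=2<3=a\) holding trivially. Nothing further is needed.
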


Letting \(c=2,a=5,l=2\), we obtain the following:
\begin{corollary}For any \(n \in \IN\), the embedding
\(\left(t^n, t^{5n+2}, t^{8n+4}+t\right)\) is rectifiable.
\end{corollary}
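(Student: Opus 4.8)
The plan is to deduce this statement directly from Theorem \ref{thm:Kurodageneral} by instantiating its parameters, so the real work is the bookkeeping of matching the given embedding against the template \((t^n, t^{an+c}, t+t^{(an+c)s-ln})\). First I would compare the middle coordinate: setting \(t^{an+c} = t^{5n+2}\) forces \(a=5\) and \(c=2\) (these are the values flagged in the corollary's statement). It then remains to choose \(l\) and \(s\) so that the third coordinate matches, i.e. so that the exponent equation \((an+c)s - ln = 8n+4\) holds.

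With \(a=5\) and \(c=2\) fixed, I would substitute to get \((5n+2)s - ln = 8n+4\). Taking \(l=2\), as recorded in the corollary, this reduces to \((5n+2)s - 2n = 8n+4\), equivalently \((5n+2)s = 10n+4 = 2(5n+2)\). The key observation is that this must hold as an identity in \(n\), and it does so precisely when \(s=2\). Hence the parameter choice \((n,a,c,l,s) = (n,5,2,2,2)\) reproduces the embedding \((t^n, t^{5n+2}, t+t^{8n+4})\), which is the stated embedding \((t^n, t^{5n+2}, t^{8n+4}+t)\) since \(t+t^{8n+4}\) and \(t^{8n+4}+t\) are the same polynomial.

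Before applying the theorem I would check its single hypothesis \(cl<a\): here \(cl = 2\cdot 2 = 4 < 5 = a\), which holds for every \(n\). Theorem \ref{thm:Kurodageneral} then immediately gives rectifiability of \((t^n, t^{5n+2}, t^{8n+4}+t)\) for all \(n \in \IN\). There is no substantive obstacle, since all the difficulty has been absorbed into the general theorem; the only points demanding care are verifying that the exponent equation \((5n+2)s-2n=8n+4\) is satisfied identically in \(n\) (forcing the clean value \(s=2\)) rather than for a single \(n\), and confirming that the hypothesis \(cl<a\) is met so that the theorem is genuinely applicable.
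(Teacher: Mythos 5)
Your proposal is correct and is exactly the paper's argument: the corollary is stated immediately after Theorem \ref{thm:Kurodageneral} as the instantiation \(c=2\), \(a=5\), \(l=2\) (with \(s=2\) implicit, as you compute), and the hypothesis \(cl=4<5=a\) holds. Your only addition is the explicit bookkeeping verifying \((5n+2)\cdot 2 - 2n = 8n+4\) identically in \(n\), which the paper leaves to the reader.
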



\begin{thebibliography}{99}

\bibitem{Abhyankar} S. S. Abhyankar, {\em On the semigroup of a meromorphic curve (Part 1)}, Intl. Symp. on Algebraic Geometry, Kyoto (1977), pp. 249-414.

\bibitem{AM} S. Abhyankar and T.T. Moh, {\em Embeddings of the line in the plane}, J. Reine Angew. Math. \textbf{276} (1975), 148--166.

\bibitem{BR} S. Bhatwadekar and A. Roy, {\em Some results on embedding of a line in 3-space}, J. Algebra \textbf{142} (1991), 101--109.

\bibitem{bivariables} J. Blanc and P.-M. Poloni, {\em Bivariables and V\'en\'ereau polynomials}, arXiv:2004.10739.


\bibitem{Craighero1} P. C. Craighero, {\em About Abhyankar's conjectures on space lines}, Rendiconti del Seminario Matematico della Universit\`a di Padova \textbf{74} (1985), 115--122.


\bibitem{Craighero2} P. C. Craighero, {\em A remark on Abhyankar's space lines}, Rendiconti del Seminario Matematico della Universit\`a di Padova, \textbf{80} (1988), 87--93.

\bibitem{Kaliman} S. Kaliman, {\em Polynomials with general \(\mathbb{C}^2\)-fibers are variables}, Pac. J. Math., \textbf{203} (2002), 161--190.

\bibitem{KVZ} S. Kaliman, S. V\'en\'ereau, M. Zaidenberg, {\em Simple birational extensions of the polynomial algebra \(\mathbb{C}^3\)}, Trans. Amer. Math. Soc., \textbf{356} (2) (2004), 509--555.

\bibitem{Kuroda} S. Kuroda, {\em The Nagata type polynomial automorphisms and rectifiable space lines}, Comm. Alg., \textbf{42} (2014), 4451--4455.

\bibitem{Vtype} D. Lewis, {\em V\'en\'ereau-type polynomials as potential counterexamples}, J. Pure and Applied Algebra, \textbf{217} (5) (2013), 946--957.

\bibitem{StronglyResidual} D. Lewis, {\em Strongly residual coordinates over \(A[x]\)}, Automorphisms in Birational and Affine Geometry (2014), 407--430.


\bibitem{RS}P. Russell and A. Sathaye, {\em On finding and cancelling variables in \(k[X, Y, Z]\)}, J. Algebra, \textbf{57} (1979), 151--166.

\bibitem{Sathaye} A. Sathaye, {\em  On linear planes}, Proc. Amer. Math. Soc. \textbf{56} (1976), 1--7.

\bibitem{SU} I. Shestakov and U. Umirbaev, {\em The tame and the wild automorphisms of polynomial rings in three variables}, J. Amer. Math. Soc. \textbf{17} (2004), 197--227.


\bibitem{Srinivas} V. Srinivas, {\em On the embedding dimension of an affine variety}, Math. Ann., \textbf{289} (1991), 125--132.

\bibitem{Suzuki} M. Suzuki, {\em Propi\'et\'es topologiques des polynomes de deux variables complexes, et automorphismes alg\'earigue de l'espace \(\mathbb{C}^2\)}, J. Math. Soc. Japan, \textbf{26} (1974), 241--257.

\bibitem{Arno} A. van den Essen, {\em Polynomial automorphisms and the Jacobian conjecture} (2000). Progress in Mathematics, 190. Birkhäuser Verlag, Basel. 

\bibitem{WrightCancel} D. Wright, {\em Cancellation of variables of the form \(bT^n-a\)}, J. Algebra, \textbf{52} (1978), 94--100.

\bibitem{Wright} D. Wright,  {\em Polynomial automorphism groups},  Polynomial automorphisms and related topics (2007), Publishing House for Science and Technology, Hanoi, 1--19.

\end{thebibliography}
\end{document}